\newtheorem{thm}{Theorem}
\numberwithin{thm}{section}
\newcommand{\rubrik}{}
\newtheorem{prop}[thm]{Proposition}
\newtheorem{lem}[thm]{Lemma}
\theoremstyle{definition}
\newtheorem{defn}[thm]{Definition}
\theoremstyle{remark}
\newtheorem{rem}[thm]{Remark}              
\newcommand{\Ker}{\operatorname{Ker}}
\newcommand{\pd}[1] {\partial ^#1}
\newcommand{\pdd}[2] {\partial_{#1} ^{#2}}
\newcommand{\ro}{\mathbb R}
\newcommand{\no}{\mathbb N}
\newcommand{\rr}[1]{\mathbb R^{#1}}
\newcommand{\nn}[1]{\mathbb N^{#1}}
\newcommand{\co}{\mathbb C}
\newcommand{\dd}{\mathrm {d}}
\newcommand{\fy}{\varphi}
\newcommand{\wpr}{{\text{\footnotesize $\#$}}}
\newcommand{\eabs}[1]{\langle #1\rangle}
\newcommand{\Sp}{\operatorname{Sp}}
\newcommand{\Mp}{\operatorname{Mp}}
\newcommand{\GL}{\operatorname{GL}}
\newcommand{\M}{\operatorname{M}}
\newcommand{\cI}{\mathscr{I}}
\newcommand{\dbar}{{{{\ \mathchar'26\mkern-12mu \mathrm d}}}}
\newcommand{\WF}{\mathrm{WF}}
\newcommand{\diag}{\mathrm{diag}}
\newcommand{\dist}{\operatorname{dist}}
\newcommand{\cS}{\mathscr{S}}
\newcommand{\cT}{\mathcal{T}}
\newcommand{\cV}{\mathcal{V}}
\newcommand{\cTp}{\mathcal{T}_{\psi_0}}
\newcommand{\cK}{\mathscr{K}}
\newcommand{\cL}{\mathscr{L}}
\newcommand{\cR}{\mathscr{R}}
\newcommand{\J}{\mathcal{J}}
\def\la{\langle}
\def\ra{\rangle}
\newcommand{\leqs}{\leqslant}
\newcommand{\geqs}{\geqslant}
\theoremstyle{definition}
\theoremstyle{remark}
\numberwithin{equation}{section}
\begin{document}

\title{Shubin type Fourier integral operators and evolution equations}

\author{Marco Cappiello}
\address{Department of Mathematics, University of Torino, Via Carlo Alberto 10, 10123 Torino, Italy.}
\email{marco.cappiello[AT]unito.it}

\author{Ren\'e Schulz}
\address{Leibniz Universit\"at Hannover, Institut f\"ur Analysis, Welfenplatz 1, D--30167 Hannover, Germany}
\email{rschulz[AT]math.uni-hannover.de}
\thanks{R. Schulz gratefully acknowledges support of the project ``Fourier Integral Operators, symplectic geometry and analysis on noncompact manifolds'' received by the University of Turin in form of an ``I@Unito'' fellowship as well as institutional support by the University of Hannover.
}

\author{Patrik Wahlberg}
\address{Department of Mathematics, Linn{\ae}us University, SE--351 95 V\"axj\"o, Sweden}
\email{patrik.wahlberg[AT]lnu.se}

\subjclass[2010]{Primary: 53S30. Secondary: 35S10, 35A22.}



\keywords{Fourier integral operator, Schr\"odinger equation, semigroup, perturbation.}

\begin{abstract}
We study the Cauchy problem for an evolution equation of Schr\"odinger type. 
The Hamiltonian is the 
Weyl quantization of a real homogeneous quadratic form with a pseudodifferential perturbation of negative order from Shubin's class. 
We prove that the propagator is a Fourier integral operator of Shubin type of order zero. 
Using results for such operators and corresponding Lagrangian distributions, 
we study the propagator and the solution, and derive phase space estimates for them. 
\end{abstract}

\maketitle

\section{Introduction}

In this article we study the propagator and solution to the Cauchy problem 
\begin{equation}
\label{eq:CP}
\tag{CP}
	\left\{
	\begin{array}{rl}
	\partial_t u(t,x) + i(q^w(x,D)+p^w(x,D)) u (t,x) & = 0, \qquad t > 0, \quad x \in \rr d, \\
	u(0,\cdot) & = u_0\in \cS'(\rr d),  
	\end{array}
	\right.
\end{equation}
where $q^w(x,D)$ is the Weyl quantization of a real homogeneous quadratic form on $T^* \rr d$ and $p^w(x,D)$ is 
a pseudodifferential perturbation operator 
with complex-valued Shubin type symbol $p$
of negative order. 
Particular examples of interest are perturbations to the free Schr\"odinger equation and the quantum harmonic oscillator. 

The Shubin class $\Gamma^m$, $m \in \ro$, introduced in \cite{Shubin1}, is defined as
the space of all functions $a \in C^\infty(\rr {2d})$ that satisfy estimates of the form
\begin{equation*}
|\partial^\alpha_x \partial^\beta_\xi a(x,\xi)| \lesssim (1+|x|+|\xi|)^{m-|\alpha+\beta|}, \qquad (x,\xi) \in \rr {2d}, \quad \alpha,\beta \in \nn d. 
\end{equation*}
Differently from H\"ormander symbols, the elements of $\Gamma^m$ exhibit a symmetric behavior in the decay with respect to $x$ and $\xi$. 
An interesting example is the symbol $a(x,\xi)=|x|^2+|\xi|^2 \in \Gamma^2$ for the harmonic oscillator operator. 
The theory of pseudodifferential operators with symbols in the Shubin classes has been developed in \cite{Shubin1} and widely applied to the study of several classes of partial differential equations, see e.g. \cite{Asada1, BBR, CGR, CN, CRT, Helffer1, HR1, Hormander1, Nicola1, Rodino1, PRW1, SW2, Tataru}. Helffer and Robert \cite{Helffer1, HR1} introduced Fourier integral operators (FIOs) with Shubin type amplitudes and phase functions that are generalized quadratic. 
Similar oscillatory integrals have been considered by Asada and Fujiwara \cite{Asada1}, see also \cite{BBR}. 

Concerning the Cauchy problem \eqref{eq:CP}, the case when $q(x,\xi)=|x|^2+|\xi|^2$ and $p=0$ is since long well known, see e.g. \cite{Folland1,deGosson2,Helffer1}. More generally, in the unperturbed case $p=0$, the solution operator to the equation \eqref{eq:CP} is a metaplectic operator, see e.g. \cite{Folland1}. 
Namely it is the unique one-parameter continuous group of metaplectic operators $\mu_t$, associated with the Hamiltonian flow $\chi_t = e^{2 t F}$ of $q$ and chosen such that $\mu_0=I$, 
where $F = \J Q$ is the real $2d \times 2d$ matrix determined by the symmetric matrix $Q$ defining $q$, $q(x,\xi) = \la (x,\xi), Q (x,\xi) \ra$, 
and the symplectic matrix
\begin{equation}\label{eq:Jdef}
\J = 
\left(
\begin{array}{cc}
0 & I_d \\
-I_d & 0 
\end{array}
\right). 
\end{equation}

We consider now the problem \eqref{eq:CP} under the presence of a non-vanishing perturbation $p$.
Recently the problem has been studied in \cite{CGNR} assuming the symbol $p$ belong to a weighted modulation space of Sj\"ostrand type, whose elements are not necessarily smooth, see also \cite{CNR,Weinstein1}. The authors proved that the equation \eqref{eq:CP} admits a propagator given by the composition of a metaplectic operator and a Weyl pseudodifferential operator with symbol in the same modulation space as $p$.

In this paper we prove  a similar statement in a different setting, namely the following result.

\begin{thm}\label{thm:mainresult}
If $\delta >0$ and $p \in \Gamma^{-\delta}$ then the Cauchy problem \eqref{eq:CP} has a propagator of the form $\mu_t a_t^w(x,D)$ where $a_t\in \Gamma^0$ for $t \geqs 0$. 
\end{thm}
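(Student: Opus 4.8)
The plan is to reduce the perturbed propagator to a conjugation of a pseudodifferential operator by the metaplectic group, and then to run a parametrix (Duhamel/Volterra series) construction inside the Shubin calculus. First I would write the sought propagator as $U(t) = \mu_t B(t)$, where $\mu_t$ is the (known) metaplectic propagator for the unperturbed equation $\partial_t v + i q^w(x,D) v = 0$, and $B(t)$ is to be determined with $B(0) = I$. Plugging this ansatz into \eqref{eq:CP} and using $\partial_t \mu_t = - i q^w(x,D)\mu_t$, the equation for $B(t)$ becomes
\begin{equation*}
\partial_t B(t) = - i\, \mu_t^{-1} p^w(x,D)\, \mu_t\, B(t), \qquad B(0) = I.
\end{equation*}
The key structural input is the symplectic covariance of the Weyl calculus: $\mu_t^{-1} p^w(x,D)\mu_t = (p\circ \chi_t)^w(x,D)$, where $\chi_t = e^{2tF}$ is the linear symplectic Hamiltonian flow of $q$. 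Since $\chi_t$ is linear and symplectic, and the Shubin classes $\Gamma^m$ are defined by the isotropic weight $\eabs{(x,\xi)} = 1+|x|+|\xi|$ which is comparable to its pullback under any fixed linear isomorphism, one checks that $p\in\Gamma^{-\delta}$ implies $p\circ\chi_t\in\Gamma^{-\delta}$ with seminorms locally bounded (indeed continuous) in $t\geqs 0$. Write $\widetilde p_t := p\circ\chi_t \in \Gamma^{-\delta}$.

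Next I would solve $\partial_t B(t) = -i\, \widetilde p_t^w(x,D)\, B(t)$, $B(0)=I$, by the Volterra iteration
\begin{equation*}
B(t) = \sum_{k=0}^\infty (-i)^k \int_{0 \leqs s_k \leqs \cdots \leqs s_1 \leqs t} \widetilde p_{s_1}^w(x,D) \cdots \widetilde p_{s_k}^w(x,D)\, \dd s_k \cdots \dd s_1,
\end{equation*}
and show the series converges to an operator of the form $b_t^w(x,D)$ with $b_t\in\Gamma^0$ — in fact each term beyond $k=0$ has symbol in $\Gamma^{-\delta}$, since the Shubin calculus is closed under composition with the order being additive: the product of $k$ operators of order $-\delta$ has symbol in $\Gamma^{-k\delta}\subset\Gamma^{-\delta}$ for $k\geqs 1$. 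The main work is the quantitative estimate: I would establish, for the composition $\widetilde p_{s_1}^w \cdots \widetilde p_{s_k}^w = c_k^w$, a bound on each $\Gamma^{-\delta}$-seminorm of $c_k$ of the form $C^k/k!$ (or $C^k \cdot R^k/k!$ uniformly for $t$ in compacts), so that after the $s$-integration over the simplex of volume $t^k/k!$ the $k$-th term is summable. This kind of factorial gain is standard for iterated symbol compositions in the Weyl/Shubin calculus — one tracks how many derivatives each composition consumes and uses the asymptotic expansion with remainder estimates — but making the constants uniform in $s_1,\dots,s_k$ (which is fine, since $\widetilde p_s$ has $t$-locally bounded seminorms) and organizing the induction cleanly is the technical heart of the argument. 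Summing, $b_t := \mathbbm 1 + \sum_{k\geqs 1}(\text{symbol of } k\text{-th term}) \in \Gamma^0$, with the nonconstant part actually in $\Gamma^{-\delta}$, and $b_t$ depends smoothly on $t$.

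Finally I would verify that $U(t) = \mu_t b_t^w(x,D)$ indeed solves \eqref{eq:CP}: differentiating the Volterra series term-by-term gives $\partial_t B(t) = -i\widetilde p_t^w(x,D) B(t)$ (the term-by-term differentiation being justified by the same geometric estimates, with convergence in the relevant operator topology on $\cS(\rr d)$ and $\cS'(\rr d)$), hence $\partial_t U(t) = -iq^w \mu_t B(t) + \mu_t(-i\widetilde p_t^w B(t)) = -i q^w U(t) - i\mu_t \widetilde p_t^w \mu_t^{-1} U(t) = -i(q^w + p^w)U(t)$ by symplectic covariance again, and $U(0)=\mu_0 b_0^w(x,D) = I$. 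Setting $a_t := b_t \in\Gamma^0$ completes the proof. The expected main obstacle is the factorial decay estimate for iterated Shubin compositions with constants uniform in the time parameters; everything else (symplectic covariance, stability of $\Gamma^m$ under linear symplectic pullback, closure of the calculus under composition) is either classical or a routine check.
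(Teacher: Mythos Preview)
Your reduction is exactly the one the paper uses: write $U(t)=\mu_t B(t)$, use symplectic covariance to get $\partial_t B(t)=-i\,p_t^w(x,D)B(t)$ with $p_t=p\circ\chi_t\in\Gamma^{-\delta}$, and expand $B(t)$ as a Volterra series. The gap is in your convergence claim for that series inside $\Gamma^0$.

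The issue is that $\Gamma^0$ is a Fr\'echet space, not a Banach algebra under the Weyl product. Continuity of $\wpr:\Gamma^{m_1}\times\Gamma^{m_2}\to\Gamma^{m_1+m_2}$ means that to control a fixed seminorm of $a\wpr b$ you need \emph{higher-order} seminorms of $a$ and $b$ (the loss is a fixed number $M$ of derivatives depending on $d$). Iterating $k-1$ times, a given seminorm of $p_{t_1}\wpr\cdots\wpr p_{t_k}$ is controlled by seminorms of the $p_{t_j}$ involving roughly $N+(k-1)M$ derivatives, and for a general $p\in\Gamma^{-\delta}$ those seminorms can grow arbitrarily fast in $k$. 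There is no mechanism producing a $C^k$ (let alone $C^k/k!$) bound, and the $t^k/k!$ from the simplex does not compensate. The ``factorial gain for iterated symbol compositions'' you invoke is not standard here; it holds in genuine Banach symbol algebras (e.g.\ Sj\"ostrand's class $M^{\infty,1}$, which is what \cite{CGNR} exploits), but not in the $\Gamma^m$ scale.

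The paper circumvents this precisely because the direct summation fails: it first quotes \cite{CGNR} to know that the Volterra series does converge in the larger space $\Gamma_0^0=\bigcap_s M^{\infty,1}_{1\otimes v_s}$, giving a well-defined propagator $T_t=\mu_t c_t^w(x,D)$ with $c_t\in\Gamma_0^0$. Then, instead of summing the series in $\Gamma^0$, it takes the individual terms $b_{t,n}\in\Gamma^{-n\delta}$ and forms an \emph{asymptotic} (Borel) sum $b_t\sim\sum_n b_{t,n}\in\Gamma^0$ (Lemma~\ref{lem:sviluppo}). This $b_t$ solves the $B$-equation only up to a Schwartz remainder $r_t$ (Lemma~\ref{lem:rtbtcont}), so $\cK_t=\mu_t b_t^w(x,D)$ is a parametrix, not the propagator. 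Finally, Duhamel's formula expresses $\cK_t-T_t$ as an integral of smoothing operators, whose kernel is shown to lie in $\cS(\rr{2d})$ (Lemma~\ref{lem:regularizer}); hence $c_t=b_t-a_t\in\Gamma^0$. If you want to salvage your argument, this asymptotic-sum-plus-Duhamel step is what has to replace the direct convergence claim.
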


With respect to \cite{CGNR} we assume more regularity on the symbol of the perturbation and we obtain a stronger conclusion on the regularity of $a_t$. Moreover, the fact that $a_t$ is a Shubin symbol allows us to obtain additional results in terms of propagation of singularities and phase estimates for the solution. For this we take advantage of some recent results for a class 
of FIOs with quadratic phase functions and Shubin amplitudes, cf. \cite{Cappiello2, Cappiello3}. In these papers
we proved phase space estimates for an FBI type transform of the kernels of the operators, see \cite{Tataru} for similar estimates in a particular case. We also proved that every operator in the class can be written as the composition of a metaplectic operator and a pseudodifferential operator with Shubin symbol and vice versa. 
As a byproduct of the analysis we derived a new notion of Lagrangian distributions in the Shubin framework which generalizes the properties of the kernels of FIOs.

Under the assumptions of Theorem \ref{thm:mainresult} the propagator  of \eqref{eq:CP} belongs to this class of FIOs for each $t \geqs 0$.
This opens up the possibility to study the singularities of solutions to \eqref{eq:CP} in detail, proving propagation results for Lagrangian type singularities and phase space estimates for the solution, see Theorems 4.5 and 4.9 below. 

The paper is organized as follows. 
In Section \ref{sec:prelim} we recall the technical tools for our analysis, in particular aspects of pseudodifferential quantization, metaplectic and symplectic analysis, Shubin type FIOs, and properties of an FBI type phase space transform which is a fundamental tool. In Section \ref{sec:application} we construct a parametrix to \eqref{eq:CP} and prove that the propagator is a Shubin type FIO. Finally in Section \ref{sec:sing} we study the singularities of propagators and solutions to \eqref{eq:CP} and deduce phase space estimates for them.
%

\section{Preliminaries on microlocal analysis in Shubin's class}\label{sec:prelim}

\subsection*{Basic notation}

The gradient operator with respect to $x \in \rr d$ is denoted $\nabla_x$. 
The symbols $\cS(\rr d)$ and $\cS'(\rr d)$ denote the Schwartz space of rapidly decaying smooth functions and the tempered distributions, respectively. The notation $f (x) \lesssim g(x)$ means $f(x) \leqs C g(x)$ for some $C>0$ for all $x$ in the domain of $f$ and of $g$. 
We write 
$(f,g)$ for the sesquilinear pairing, conjugate linear in the second argument, between a distribution $f$ and a test function $g$, as well as the $L^2$ scalar product if $f,g \in L^2(\rr d)$.
The linear pairing of a distribution $f$ and a test function $g$ is written $\la f, g \ra$. 
The symbols $T_{x_0}u(x)=u(x-x_0)$ and $M_\xi u(x) = e^{i \la x,\xi \ra}u(x)$, where $\la \cdot,\cdot \ra$ denotes the inner product on $\rr d$, are used for translation by $x_0\in \rr d$ and modulation by $\xi \in \rr d$, respectively, applied to functions or distributions. For $x\in\rr{d}$ we use $\eabs{x}:=\sqrt{1+|x|^2}$, and 
Peetre's inequality is
\begin{equation*}
\eabs{x+y}^s \leqs C_s\, \eabs{x}^s \,\eabs{y} ^{|s|}, \qquad x, y \in\rr{d}, \quad C_s>0, \quad s \in \ro. 
\end{equation*}
We write $\dbar x = (2\pi)^{-d}\dd x$ for the dual Lebesgue measure, 
denote by $\M_{d_1 \times d_2}( \ro )$ the space of $d_1\times d_2$ matrices with real entries, and by $\GL(d,\ro ) \subseteq \M_{d \times d}( \ro )$ the group of invertible matrices. 
The orthogonal projection on a linear subspace $Y \subseteq \rr d$ is denoted $\pi_Y$. 
The symbol $\cL(H)$ stands for the space of linear continuous operators on a Hilbert space $H$. 

\subsection*{An integral transform of FBI type} 

The following integral transform has been used extensively in \cite{Cappiello2,Cappiello3} and is used also in this article. 
For more information see \cite{Cappiello2}. 
\begin{defn}\label{def:FBItransform}
Let $u\in \cS^\prime(\rr d)$ and let $g\in \cS(\rr d)\setminus\{0\}$. The transform $u \mapsto \cT_g u$ is defined by 
\begin{equation*}
\cT_g u(x,\xi)=(2\pi)^{-d/2}(u,T_x M_{\xi}g), \quad x, \xi \in \rr d. 
\end{equation*}
\end{defn}

If $u \in \cS(\rr d)$ then $\cT_g u \in \cS(\rr {2d})$ by \cite[Theorem~11.2.5]{Grochenig1}. 
The adjoint $\cT_g^*$ is defined by $(\cT_g^* U, f) = (U, \cT_g f)$ for $U \in \cS'(\rr {2d})$ and $f \in \cS(\rr d)$. 
When $U$ is a polynomially bounded measurable function we write
\begin{equation*}
\cT_g^* U(y) = (2\pi)^{-d/2} \int_{\rr {2d}} U(x,\xi) \, T_{x} M_{\xi} g(y) \, \dd x \, \dd \xi ,
\end{equation*}
where the integral is defined weakly so that $(\cT_g^* U, f) = (U, \cT_g f)_{L^2}$ for $f \in \cS(\rr d)$. 
\begin{prop}
\label{prop:Swdchar}
{\rm \cite[Theorem~11.2.3]{Grochenig1}}
Let $u\in\cS'(\rr d)$ and let $g \in \cS(\rr d) \setminus 0$. Then $\cT_g u\in C^\infty(\rr {2d})$ and there exists $N \in \no$ such that %
\begin{equation*}
|\cT_g u(x,\xi)|\lesssim \eabs{(x,\xi)}^{N}, \quad (x,\xi) \in \rr {2d}.
\end{equation*}
We have $u\in \cS(\rr d)$ if and only if for any $N \geqs 0$ 
\begin{equation*}
|\cT_g u(x,\xi)|\lesssim \eabs{(x,\xi)}^{-N}, \quad (x,\xi) \in \rr {2d}. 
\end{equation*}
\end{prop}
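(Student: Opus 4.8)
The plan is to prove, in order, the smoothness and the polynomial growth bound, and then the two implications of the characterization, the converse being the substantive one. For the first two claims I would observe that the map $(x,\xi)\mapsto T_x M_\xi g$, that is $T_x M_\xi g(y)=e^{i\la y-x,\xi\ra}g(y-x)$, is $C^\infty$ from $\rr{2d}$ into $\cS(\rr d)$ with its Fr\'echet topology: its $x$- and $\xi$-derivatives are finite linear combinations of monomials in $(x,\xi,y)$ times translated and modulated derivatives of $g$, and the corresponding difference quotients converge in every Schwartz seminorm. Since $u$ acts continuously and conjugate-linearly on $\cS(\rr d)$, composition gives $\cT_g u\in C^\infty(\rr{2d})$ with $\partial_x^\alpha\partial_\xi^\beta\cT_g u(x,\xi)=(2\pi)^{-d/2}\big(u,\partial_x^\alpha\partial_\xi^\beta(T_x M_\xi g)\big)$. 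For the growth estimate I would use the continuity seminorm bound for $u\in\cS'(\rr d)$, namely $|(u,\varphi)|\leqs C\sum_{|\alpha|+|\beta|\leqs N}\sup_y|y^\alpha\partial_y^\beta\varphi(y)|$ for some $C>0$ and $N\in\no$, and estimate the relevant seminorms of $\varphi=T_x M_\xi g$ by Leibniz' rule together with Peetre's inequality; this yields $|\cT_g u(x,\xi)|\lesssim\eabs{(x,\xi)}^{N}$.

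The necessity of rapid decay is immediate: if $u\in\cS(\rr d)$ then $\cT_g u\in\cS(\rr{2d})$ by \cite[Theorem~11.2.5]{Grochenig1}, in particular $|\cT_g u(x,\xi)|\lesssim\eabs{(x,\xi)}^{-N}$ for every $N\geqs 0$. For the converse I would argue via a reconstruction formula. Fix $\gamma\in\cS(\rr d)$ with $(g,\gamma)\neq 0$, say $\gamma=g$. The orthogonality relations for $\cT$, together with a density argument, give a nonzero constant $c=c(d,g,\gamma)$ such that $u=c\,\cT_\gamma^*\cT_g u$ for all $u\in\cS'(\rr d)$, in the weak sense that $(u,f)=c\,(\cT_g u,\cT_\gamma f)_{L^2}$ for $f\in\cS(\rr d)$. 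Assuming now that $\cT_g u$ decays faster than every polynomial, and noting as above that each Schwartz seminorm of $T_x M_\xi\gamma$ is dominated by a fixed power of $\eabs{(x,\xi)}$, the $\cS(\rr d)$-valued integral
\begin{equation*}
\cT_\gamma^*\cT_g u(y)=(2\pi)^{-d/2}\int_{\rr{2d}}\cT_g u(x,\xi)\,T_x M_\xi\gamma(y)\,\dd x\,\dd\xi
\end{equation*}
converges absolutely in every Schwartz seminorm, so that $\cT_\gamma^*\cT_g u\in\cS(\rr d)$ (differentiation under the integral sign being justified by dominated convergence). By the reconstruction identity, $u$ then coincides, as a tempered distribution, with the Schwartz function $c\,\cT_\gamma^*\cT_g u$, hence $u\in\cS(\rr d)$.

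The only part that is not essentially formal is the converse implication, and there the crux is two-fold: justifying the reconstruction identity at the level of $\cS'(\rr d)$, and, more importantly, showing that $\cT_\gamma^*$ sends rapidly decreasing functions on $\rr{2d}$ into $\cS(\rr d)$. The latter reduces to a uniform estimate of all Schwartz seminorms of the time--frequency shifts $T_x M_\xi\gamma$ by powers of $\eabs{(x,\xi)}$, which is elementary (Leibniz and Peetre) but is exactly what lets the super-polynomial decay of $\cT_g u$ absorb the polynomial growth coming from the synthesis window $\gamma$.
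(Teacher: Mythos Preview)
The paper does not give its own proof of this proposition: it is quoted verbatim as \cite[Theorem~11.2.3]{Grochenig1} and used as background. So there is no in-paper argument to compare against; your sketch is being measured against the standard proof in Gr\"ochenig's book, and it follows that proof essentially line for line.

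Your argument is correct. Smoothness via $C^\infty$ dependence of $(x,\xi)\mapsto T_xM_\xi g$ in the Fr\'echet topology of $\cS(\rr d)$, polynomial growth from the seminorm continuity estimate for $u\in\cS'(\rr d)$ combined with Leibniz and Peetre, necessity from $\cT_g:\cS\to\cS$, and sufficiency via the inversion identity $\cT_g^*\cT_g u=\|g\|_{L^2}^2\,u$ (which the paper itself records just after this proposition) together with the observation that $\cT_g^*$ carries polynomially bounded rapidly decreasing functions into $\cS(\rr d)$ --- this is exactly the route taken in \cite{Grochenig1}. The one point worth making explicit in a final write-up is that the hypothesis gives only decay of $|\cT_g u|$, not of its derivatives, so the $\cS(\rr d)$-valued Bochner integral argument for $\cT_g^*\cT_g u$ must rely solely on $|\cT_g u(x,\xi)|\lesssim\eabs{(x,\xi)}^{-N}$ against the polynomial seminorm growth of $T_xM_\xi g$; you have this right, but it is the step most often glossed over.
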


The transform $\cT_g$ is related to the short-time Fourier transform \cite{Grochenig1}
\begin{equation*}
\cV_g u(x,\xi) = (2\pi)^{-d/2}(u,M_{\xi} T_x g), \quad x, \xi \in \rr d, 
\end{equation*}
viz. $\cT_g u(x,\xi) = e^{i \la x, \xi \ra} \cV_{g}u(x,\xi)$.
If $g,h\in \cS(\rr d)$ then
\begin{equation*}
\cT_h^*\cT_g u=(h,g) u, \qquad u \in \cS'(\rr d), 
\end{equation*}
and thus $\|g\|_{L^2}^{-2}\cT_g^*\cT_g u=u$ for $u \in \cS'(\rr d)$ and $g \in \cS(\rr d) \setminus 0$, cf. \cite{Grochenig1}.

Finally we recall the definition of the Gabor wave front set which describes global singularities of tempered distributions in phase space, cf. \cite{Hormander1,Rodino1,SW2}. 

\begin{defn}\label{def:WFG}
If $u \in \cS'(\rr d)$ and $g \in \cS(\rr d) \setminus 0$ then $z_0 \in T^*\rr d  \setminus 0$ satisfies  $z_0 \notin \WF(u)$ if 
there exists an open cone $V \subseteq T^* \rr d \setminus 0$ containing $z_0$, such that for any $N \in \no$ there exists $C_{V,g,N}>0$ such that $|\cT_g u(z)|\leqs C_{V,g,N} \eabs{z}^{-N}$ when $z \in V$.
\end{defn}

The Gabor wave front set is hence a closed conic subset of $T^*\rr d  \setminus 0$. 
If $u \in \cS'(\rr d)$ then $WF(u) = \emptyset$ if and only if $u \in \cS(\rr d)$ \cite[Proposition~2.4]{Hormander1}. 

\subsection*{Weyl pseudodifferential operators} 

We use pseudodifferential operators in the Weyl calculus with Shubin amplitudes \cite{Shubin1,Nicola1}. 
Recall that $a \in C^\infty(\mathbb{R}^{N_1}\times \mathbb{R}^{N_2})$ is a Shubin amplitude of order $m \in \ro$, 
denoted $a\in \Gamma^m(\mathbb{R}^{N_1}\times \mathbb{R}^{N_2})$, if it satisfies the estimates 
\begin{equation}\label{eq:shubinestimate}
|\partial_x^\alpha \partial_\xi^\beta a(x,\xi)| 
\lesssim \eabs{(x,\xi)}^{m-|\alpha+\beta|}, \quad (\alpha, \beta) \in \mathbb{N}^{N_1}\times \mathbb{N}^{N_2}, \quad (x,\xi) \in \mathbb{R}^{N_1}\times \mathbb{R}^{N_2}. 
\end{equation} 
We write $\Gamma^m = \Gamma^m(\rr {2d})$ and observe that $\bigcap_{m \in \ro} \Gamma^{m}=\cS(\rr {2d})$. 
The space $\Gamma^m$ is a Fr\'echet space with respect to the seminorms that are the best constants hidden in \eqref{eq:shubinestimate}.  

To a Shubin amplitude $a \in \Gamma^m$ one associates its pseudodifferential Weyl quantization, which is the operator $a^w(x,D)$ with Schwartz kernel
\begin{equation}\label{eq:weylsymbolkernel}
K_{a}(x,y) = \int_{\rr {d}} e^{i \la x-y, \xi \ra} a\left((x+y)/2,\xi \right) \, \dbar \xi \in\cS'(\rr {2d})
\end{equation}
interpreted as an oscillatory integral. 
Then $a^w(x,D)$ is a continuous operator on $\cS(\rr d)$ that extends uniquely to a continuous operator on $\cS'(\rr d)$.
If $a\in\cS(\rr {2d})$ then $a^w(x,D): \cS'(\rr d)\rightarrow \cS(\rr d)$ is continuous when $\cS'(\rr d)$ is equipped with its strong topology. 
Conversely, any continuous linear operator from $\cS'(\rr d)$, endowed with the strong topology, to $\cS(\rr d)$ may be represented as $a^w(x,D)$ for some $a\in\cS(\rr {2d})$ \cite{Treves1}. 

For $a \in \cS'(\rr {2d})$ and $f,g \in \cS(\rr d)$ we have
\begin{equation}\label{eq:wignerweyl}
(a^w(x,D) f,g) = (2 \pi)^{-d/2} (a, W(g,f) ) 
\end{equation}
where $W(g,f)$ is the Wigner distribution \cite{Folland1,Grochenig1}
\begin{equation*}
W(g,f) (x,\xi) = (2 \pi)^{-d/2} \int_{\rr d} g(x+y/2) \overline{f(x-y/2)} \, e^{- i \la y, \xi \ra} \, \dd y \in \cS(\rr {2d}).
\end{equation*}

The Weyl product $a \wpr b:\Gamma^{m_1}\times\Gamma^{m_2}\rightarrow \Gamma^{m_1+m_2}$ is the continuous product (cf. \cite{Shubin1}) on the symbol level corresponding to composition of operators:
\begin{equation*}
(a \wpr b )^w(x,D) = a^w(x,D) b^w(x,D).
\end{equation*}
There is a scale of Sobolev spaces $Q^s(\rr d)$, $s \in \ro$, defined by
\begin{equation*}
Q^s (\rr d) = \{ u \in \mathscr{S}'(\rr d): v_s^w(x,D) u \in L^2(\rr d) \}, 
\end{equation*}
where $v_s(x,\xi) = \langle (x,\xi) \rangle^s$, which is adapted to the Shubin calculus.
We have (cf. \cite[Corollary~25.2]{Shubin1})
\begin{equation}\label{eq:SQs}
\cS(\rr d)=\bigcap_{s \in \ro} Q^s(\rr d),\qquad \cS'(\rr d) = \bigcup_{s \in \ro} Q^s(\rr d).
\end{equation}
The Weyl quantization of $\Gamma^m$ yields continuous maps
\begin{equation}\label{eq:shubinpsdocont}
a^w(x,D):Q^{s}(\rr d)\rightarrow Q^{s-m}(\rr d), \qquad s\in\ro, 
\end{equation}
and the $Q^s\rightarrow Q^{s-m}$ operator norm of $a^w(x,D)$ can be estimated by a finite linear combination of seminorms of $a \in \Gamma^m$.

We use the description of $Q^s$ in terms of localization operators \cite[Proposition~1.7.12]{Nicola1}.
Let $\psi_0 = \pi^{-d/4} e^{-|x|^2/2}$, $x \in \rr d$. 
A localization operator $A_a$ with symbol $a \in \cS'(\rr {2d})$ is defined by 
\begin{equation*}
(A_a u, f) = (a \, \cTp u, \cTp f), \quad u,f \in \cS(\rr d). 
\end{equation*}
In terms of the localization operator $A_s := A_{v_s}$, the space $Q^s(\rr d)$ is the Hilbert modulation space of all $u\in\cS'(\rr d)$ such that $A_s u\in L^2(\rr d)$, equipped with the norm
$\| u \|_{Q^s} = \| A_s u \|_{L^2}$.

It is possible to express localization operators as pseudodifferential operators (cf. \cite[Section~1.7.2]{Nicola1}) writing $A_a = b^w(x,D)$ where 
\begin{equation}\label{eq:localizationweyl}
b = \pi^{-d} e^{-|\cdot|^2} * a. 
\end{equation}
%

\subsection*{Metaplectic operators}

We view $T^* \rr d \cong \rr d\times\rr d$ as a symplectic vector space equipped with the 
canonical symplectic form
\begin{equation}\label{eq:cansympform}
\sigma((x,\xi), (x',\xi')) = \la x' , \xi \ra - \la x, \xi' \ra, \quad (x,\xi), (x',\xi') \in T^* \rr d.
\end{equation}
The real symplectic group $\Sp(d,\ro) \subseteq \GL(2d,\ro)$ is the set of matrices that leaves $\sigma$ invariant. 
An often occurring symplectic matrix is $\J \in \Sp(d,\ro)$ defined in \eqref{eq:Jdef}. 

The metaplectic group \cite{deGosson2,Leray1} $\Mp(d)$ is a group of unitary operators on $L^2(\rr d)$,
which is a (connected) double covering of the symplectic group $\Sp(d,\ro)$. 
In fact the two-to-one projection $\pi: \Mp(d) \rightarrow \Sp(d,\ro)$ has kernel is $\pm I$. 
Each operator $\mu \in \Mp(d)$ is a homeomorphism on $\mathscr S$ and on $\mathscr S'$. 
The metaplectic covariance of the Weyl calculus reads 
\begin{equation}\label{eq:metaplecticoperator}
\mu^{-1} a^w(x,D) \, \mu = (a \circ \chi_\mu)^w(x,D), \quad a \in \cS'(\rr {2d}), 
\end{equation}
where $\mu \in \Mp(d)$ and $\chi_\mu = \pi (\mu)$ (cf. \cite[Theorem~215]{deGosson2}, \cite{Folland1}).

\subsection*{Fourier integral operators with Shubin amplitudes}

In \cite{Cappiello3} we have introduced a class of Fourier integral operators (FIOs) with quadratic phase functions and Shubin amplitudes. The space of Shubin type FIOs of order $m \in \ro$ associated with $\chi \in \Sp(d,\ro)$, denoted $\cI^m(\chi)$, consists of those operators $\cK$ whose kernels admit oscillatory integral representations of the form 
\begin{equation*}
K_{a,\varphi}(x,y) = \int_{\rr N} e^{i \varphi(x,y,\theta)} a(x,y,\theta) \, \dd \theta,\qquad (x,y) \in \rr {2d},
\end{equation*}
where $a \in\Gamma^m(\rr{2d} \times \rr N)$. 
The phase function $\fy$ is a real quadratic form on $\rr {2d+N}$
which parametrizes the twisted graph Lagrangian 
\begin{equation}\label{eq:twistedgraphlagrangian}
\Lambda_\chi' = \{(x, y, \xi,-\eta) \in T^* \rr {2d}: \  (x,\xi) = \chi (y,\eta) \} \subseteq T^* \rr {2d}
\end{equation}
corresponding to $\chi \in \Sp(d, \ro)$. 
(Cf. \cite[Definitions~3.5 and 4.1]{Cappiello3}.)
We will not use this representation here but merely recall the following result, see also \cite[Theorem~1.3]{CGNR} for a related result where certain modulation spaces are used as amplitudes.

\begin{thm}\label{thm:repFIO} 
{\rm \cite[Theorem~4.15]{Cappiello3}}
If $\chi \in \Sp(d,\ro)$ and $\cK \in \cI^m(\chi)$ then 
there exist $b \in \Gamma^m$ such that for any $\mu \in \Mp(d)$ such that $\chi = \pi(\mu)$
\begin{equation*}
\cK  = b^w(x,D)  \mu = \mu (b \circ \chi)^w(x,D). 
\end{equation*}
Conversely, for any $b \in \Gamma^m$ we have $b^w(x,D) \mu \in \cI^m(\chi)$.
\end{thm}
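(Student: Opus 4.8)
\textbf{Proof strategy for Theorem~\ref{thm:repFIO}.}
The plan is to exploit the structure of the metaplectic group and reduce the two assertions to statements about pseudodifferential operators. For the converse direction, which is the easier one, I would start from $b \in \Gamma^m$ and a fixed $\mu \in \Mp(d)$ with $\chi = \pi(\mu)$. Then $b^w(x,D)$ has Schwartz kernel $K_b$ given by \eqref{eq:weylsymbolkernel}, which can itself be written as an oscillatory integral of the form $\int_{\rr d} e^{i\varphi_0(x,y,\theta)} b((x+y)/2,\theta)\, \dd\theta$ with the nondegenerate linear phase $\varphi_0(x,y,\theta) = \la x-y,\theta\ra$ parametrizing the diagonal Lagrangian $\Lambda_{I}'$. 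Composing on the right with $\mu$ amounts to composing the kernel with the kernel of $\mu$; since $\mu \in \Mp(d)$ is, up to normalization and metaplectic phase, a ``generalized Gaussian'' whose kernel is (a regularized limit of) an exponential of a quadratic form, the composition produces a kernel of the form $K_{a,\varphi}$ with $a \in \Gamma^m(\rr{2d}\times\rr N)$ for a suitable $N$ and a real quadratic phase $\varphi$ parametrizing $\Lambda_\chi'$. Here I would invoke the composition calculus for Shubin-type FIOs developed in \cite{Cappiello3} (composition of an element of $\cI^m(I)$ with an element of $\cI^0(\chi)$, noting that $\mu \in \cI^0(\chi)$), together with the explicit computation that $\Lambda_I' \circ \Lambda_\chi' = \Lambda_\chi'$; this is essentially bookkeeping once the calculus is in place.

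For the main direction, suppose $\cK \in \cI^m(\chi)$ with kernel $K_{a,\varphi}$, $a \in \Gamma^m(\rr{2d}\times\rr N)$, and let $\mu \in \Mp(d)$ be any metaplectic operator with $\pi(\mu) = \chi$. The idea is to set $\cL := \cK \mu^{-1}$ and show $\cL = b^w(x,D)$ for some $b \in \Gamma^m$; the factorization $\cK = \mu(b\circ\chi)^w(x,D)$ then follows from the metaplectic covariance \eqref{eq:metaplecticoperator}, since $\mu^{-1} b^w(x,D)\mu = (b\circ\chi)^w(x,D)$ gives $\cK = b^w(x,D)\mu = \mu\,(b\circ\chi)^w(x,D)$. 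Since $\mu^{-1} \in \Mp(d)$ projects to $\chi^{-1} \in \Sp(d,\ro)$, and since $\mu^{-1} \in \cI^0(\chi^{-1})$, one uses again the composition calculus of \cite{Cappiello3}: the composition $\cI^m(\chi) \circ \cI^0(\chi^{-1}) \subseteq \cI^m(\chi \circ \chi^{-1}) = \cI^m(I)$, because the canonical relation composes to $\Lambda_\chi' \circ \Lambda_{\chi^{-1}}' = \Lambda_I'$, the twisted diagonal. Thus $\cL \in \cI^m(I)$, i.e.\ $\cL$ has a kernel of the form $\int e^{i\psi(x,y,\theta)}c(x,y,\theta)\,\dd\theta$ with $c \in \Gamma^m$ and $\psi$ a real quadratic form parametrizing $\Lambda_I'$. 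The final step is to identify the class of operators $\cI^m(I)$ with Weyl operators $b^w(x,D)$, $b \in \Gamma^m$: one chooses the ``standard'' parametrization $\psi_0(x,y,\theta) = \la x-y,\theta\ra$ of $\Lambda_I'$, shows every other parametrizing quadratic form is equivalent to $\psi_0$ up to a linear change of the fiber variable $\theta$ (possibly adding extra fiber variables and a nondegenerate quadratic term, which integrates out by a Gaussian/stationary-phase computation producing only a constant factor), and reads off from \eqref{eq:weylsymbolkernel} that the resulting kernel is $K_b$ with $b \in \Gamma^m$ obtained from $c$ by this reduction. Asymptotic summation in the Shubin calculus handles the lower-order terms generated by the stationary phase expansion.

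The main obstacle is the bookkeeping in the composition calculus and, more delicately, the reduction of an arbitrary quadratic phase parametrizing $\Lambda_I'$ to the normal form $\la x-y,\theta\ra$. Two quadratic forms parametrizing the same Lagrangian need not differ merely by a linear change of fiber variables; in general one must first perform an equivalence in the sense of H\"ormander (change of fiber variables plus addition of a nondegenerate quadratic form in new variables), apply stationary phase in those new variables, and then verify that the amplitude stays in $\Gamma^m$ with control of all Shubin seminorms through the oscillatory-integral manipulations. Keeping track of the metaplectic phase factors (the $\pm I$ ambiguity in the covering) and ensuring that the choice of $\mu$ with $\pi(\mu)=\chi$ only affects $b$ by an overall sign — hence does not change $b^w(x,D)\mu$ — is a further point that must be checked but is not conceptually hard. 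All of these steps, except possibly a sign tracking, are already available in \cite{Cappiello3}, so the bulk of the proof is an assembly of those results; I would state the composition and normal-form lemmas explicitly and then give the short argument above.
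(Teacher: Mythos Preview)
The present paper does not prove this theorem: it is quoted verbatim as \cite[Theorem~4.15]{Cappiello3} and used as a black box, so there is no ``paper's own proof'' here to compare against. Your outline is a plausible reconstruction of how the result is established in \cite{Cappiello3}, and the numbering there (composition is Proposition~4.10, factorization is Theorem~4.15) is consistent with your plan of deducing the factorization from the composition calculus together with the identification $\cI^m(I) = \{\, b^w(x,D): b \in \Gamma^m\,\}$.

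One caution: your argument is only noncircular if the composition result Theorem~\ref{prop:composition} and the fact that $\mu \in \cI^0(\chi)$ are proved in \cite{Cappiello3} \emph{without} invoking the factorization you are trying to establish; you are implicitly assuming this, and it is worth stating explicitly. The genuinely substantive step you flag --- reducing an arbitrary quadratic phase parametrizing $\Lambda_I'$ to the Weyl phase $\langle x-y,\theta\rangle$ while keeping the amplitude in $\Gamma^m$ --- is indeed the heart of the matter and cannot be dispatched by a one-line appeal to H\"ormander's equivalence of phase functions, since the Shubin seminorm control under stationary phase in the extra fiber variables must be checked; this is where the real work in \cite{Cappiello3} lies. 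Your sketch is correct in spirit but would need those two ingredients (noncircular composition, and the $\cI^m(I)$ normal form with Shubin estimates) spelled out to stand on its own.
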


This means that FIOs in $\cI^m(\chi)$ admit factorization into a pseudodifferential operator and a metaplectic operator corresponding to $\chi$. The factorization is uniquely determined by the order of arrangement. 
In particular $\cI^m(I)$, where $I \in \GL(2d,\ro)$ is the identity matrix, is the space of pseudodifferential operators with Shubin amplitudes of order $m \in \ro$.  
A kernel of the form $K_{1,\varphi}$, i.e. trivial amplitude, 
corresponds to the operator $C_\fy \mu$ where $\chi = \pi(\mu)$ and $C_\fy \in \co \setminus 0$ (cf. \cite{Cappiello3, Hormander2, Leray1}).
A fundamental result for FIOs is the following composition theorem. 

\begin{thm}\label{prop:composition} 
{\rm \cite[Proposition~4.10]{Cappiello3}}
Let $\chi_j \in \Sp(d,\ro)$ and suppose $\cK_j \in \cI^{m_j}(\chi_j)$, for $j=1,2$. 
Then $\cK_1 \cK_2  \in \cI^{m_1+m_2}(\chi_1 \chi_2)$. 
\end{thm}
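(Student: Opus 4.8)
The plan is to avoid working with the oscillatory integral kernels directly and instead reduce the statement to the factorization result Theorem~\ref{thm:repFIO}, the metaplectic covariance \eqref{eq:metaplecticoperator}, and the mapping properties of the Weyl product. First I would apply Theorem~\ref{thm:repFIO} to each factor: there exist $b_j \in \Gamma^{m_j}$ and $\mu_j \in \Mp(d)$ with $\chi_j = \pi(\mu_j)$ such that $\cK_j = b_j^w(x,D)\, \mu_j$ for $j = 1,2$. Then
\begin{equation*}
\cK_1 \cK_2 = b_1^w(x,D)\, \mu_1\, b_2^w(x,D)\, \mu_2 .
\end{equation*}
The next step is to commute $\mu_1$ past $b_2^w(x,D)$. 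Since $\pi$ is a group homomorphism we have $\pi(\mu_1^{-1}) = \chi_1^{-1}$, and \eqref{eq:metaplecticoperator} gives $\mu_1\, b_2^w(x,D) = (b_2 \circ \chi_1^{-1})^w(x,D)\, \mu_1$, so that
\begin{equation*}
\cK_1 \cK_2 = b_1^w(x,D)\, (b_2 \circ \chi_1^{-1})^w(x,D)\, \mu_1 \mu_2 .
\end{equation*}

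Now I would note that $b_2 \circ \chi_1^{-1} \in \Gamma^{m_2}$: composition with the linear invertible map $\chi_1^{-1}$ preserves the Shubin class, since $\eabs{\chi_1^{-1} z} \asymp \eabs{z}$ and the chain rule merely redistributes the derivatives among the $2d$ components, so the estimates \eqref{eq:shubinestimate} are preserved up to constants. Consequently, by the continuity of the Weyl product $\wpr \colon \Gamma^{m_1} \times \Gamma^{m_2} \to \Gamma^{m_1+m_2}$, there is $c := b_1 \wpr (b_2 \circ \chi_1^{-1}) \in \Gamma^{m_1+m_2}$ with $b_1^w(x,D)\,(b_2 \circ \chi_1^{-1})^w(x,D) = c^w(x,D)$. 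Moreover $\mu_1 \mu_2 \in \Mp(d)$ and $\pi(\mu_1 \mu_2) = \pi(\mu_1)\pi(\mu_2) = \chi_1 \chi_2$. Hence $\cK_1 \cK_2 = c^w(x,D)\,(\mu_1\mu_2)$ with $c \in \Gamma^{m_1+m_2}$ and $\mu_1\mu_2$ covering $\chi_1\chi_2$, and the converse part of Theorem~\ref{thm:repFIO} yields $\cK_1 \cK_2 \in \cI^{m_1+m_2}(\chi_1\chi_2)$.

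With this route the only real content beyond bookkeeping is the stability $b_2 \circ \chi_1^{-1} \in \Gamma^{m_2}$, which is elementary; all the analytic weight is carried by Theorem~\ref{thm:repFIO} and the Shubin--Weyl calculus. The main obstacle one would face in a more self-contained argument is the alternative: composing the kernels $K_{a_1,\varphi_1}$ and $K_{a_2,\varphi_2}$ by integrating out the intermediate variable via a generalized stationary phase computation for the combined quadratic phase, then controlling its degenerate directions and checking that the resulting phase parametrizes the twisted graph Lagrangian $\Lambda'_{\chi_1\chi_2}$ of \eqref{eq:twistedgraphlagrangian}. The factorization approach is preferable precisely because it sidesteps that delicate phase analysis.
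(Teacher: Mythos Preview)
The paper does not give its own proof of this statement; it is quoted verbatim from \cite[Proposition~4.10]{Cappiello3} as part of the background in Section~\ref{sec:prelim}. Your argument is a correct deduction from the other results assembled in that section: once the factorization Theorem~\ref{thm:repFIO} is taken as given, composition follows exactly as you write, via metaplectic covariance \eqref{eq:metaplecticoperator}, the invariance of $\Gamma^{m_2}$ under linear pullback, and the Weyl product $\Gamma^{m_1} \times \Gamma^{m_2} \to \Gamma^{m_1+m_2}$.

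One caveat is worth flagging. In the source \cite{Cappiello3} the composition result is Proposition~4.10 while the factorization is Theorem~4.15, so the latter is proved \emph{after} the former and very plausibly depends on it: the standard route to factorization is to note that $\mu \in \cI^0(\chi)$ and then apply composition to conclude $\cK \mu^{-1} \in \cI^m(I)$, i.e.\ $\cK \mu^{-1}$ is a Shubin pseudodifferential operator. If that is indeed how \cite{Cappiello3} proceeds, your argument---while perfectly valid within the present paper, where both results are imported as black boxes---would be circular as an independent proof. The oscillatory-integral/stationary-phase route you sketch at the end is therefore likely closer to what \cite{Cappiello3} actually does, and is the genuine analytic content behind the statement.
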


We state the mapping properties of FIOs with respect to the Shubin--Sobolev spaces $Q^s$ and the Gabor wave front set respectively.

\begin{prop}\label{prop:mapprop} 
{\rm \cite[Proposition~4.16 and Corollary~5.4]{Cappiello3}}
Suppose $\chi \in \Sp(d,\ro)$ and $\cK \in \cI^m(\chi)$. 
Then $\cK: Q^{s}(\rr d) \rightarrow Q^{s-m}(\rr d)$ is continuous for all $s \in \ro$. 
For all $u \in \cS'(\rr d)$ we have
$\WF( \cK u ) \subseteq \chi \WF(u)$. 
\end{prop}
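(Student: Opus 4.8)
The plan is to reduce everything to the factorization theorem, Theorem~\ref{thm:repFIO}, which says that any $\cK \in \cI^m(\chi)$ can be written as $\cK = b^w(x,D)\mu$ with $b \in \Gamma^m$ and $\mu \in \Mp(d)$ a fixed metaplectic operator with $\pi(\mu)=\chi$. Once this is in hand, both assertions follow by combining the known behaviour of the two factors separately.

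\textbf{Continuity on $Q^s$.}
First I would recall that each $\mu \in \Mp(d)$ is a homeomorphism on $\cS'(\rr d)$, and that the metaplectic covariance \eqref{eq:metaplecticoperator} together with the description \eqref{eq:SQs} of the spaces $Q^s$ implies that $\mu: Q^s(\rr d) \to Q^s(\rr d)$ is a continuous isomorphism for every $s \in \ro$. (Concretely, $v_s^w(x,D)\mu u \in L^2$ iff $\mu^{-1} v_s^w(x,D)\mu u = (v_s \circ \chi)^w(x,D)u \in L^2$, and since $v_s \circ \chi \in \Gamma^{|s|}$ with $v_s$-comparable growth, this holds iff $u \in Q^s$; one also gets the two-sided norm estimate from \eqref{eq:shubinpsdocont} applied to $v_s \circ \chi$ and its parametrix.) Next, \eqref{eq:shubinpsdocont} gives that $b^w(x,D): Q^s(\rr d) \to Q^{s-m}(\rr d)$ is continuous for $b \in \Gamma^m$. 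Composing, $\cK = b^w(x,D)\mu: Q^s(\rr d) \xrightarrow{\mu} Q^s(\rr d) \xrightarrow{b^w(x,D)} Q^{s-m}(\rr d)$ is continuous for every $s \in \ro$.

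\textbf{Wave front set inclusion.}
For the Gabor wave front set I would argue in two stages. For the pseudodifferential factor, it is a standard microlocality property of the Shubin Weyl calculus that $\WF(b^w(x,D)v) \subseteq \WF(v)$ for $b \in \Gamma^m$ and $v \in \cS'(\rr d)$ (cf.\ \cite{Hormander1,Rodino1}); this can be seen directly via the transform $\cTp$ and the representation \eqref{eq:localizationweyl} of localization operators, or cited. For the metaplectic factor I would prove the symplectic covariance $\WF(\mu v) = \chi\, \WF(v)$, $\chi = \pi(\mu)$: testing $\mu v$ against $T_x M_\xi \psi_0$ and using that $\mu^* T_x M_\xi \psi_0$ is, up to a unimodular constant and the metaplectic action on the Gaussian, a coherent state centered at $\chi^{-1}(x,\xi)$ (since $\mu$ implements the linear symplectomorphism $\chi$ on phase space), one sees that $|\cTp(\mu v)(z)|$ and $|\cTp v(\chi^{-1}z)|$ have the same conic decay properties; hence $z_0 \notin \WF(\mu v)$ iff $\chi^{-1}z_0 \notin \WF(v)$. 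Combining, for $u \in \cS'(\rr d)$ set $v = \mu u$, so $\WF(v) = \chi\,\WF(u)$, and then $\WF(\cK u) = \WF(b^w(x,D)v) \subseteq \WF(v) = \chi\,\WF(u)$, as claimed.

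\textbf{Main obstacle.}
The routine part is the composition of continuities; the step requiring genuine care is the symplectic covariance $\WF(\mu v) = \chi\,\WF(v)$ for $\mu \in \Mp(d)$, i.e.\ tracking precisely how $\mu$ transforms a coherent state $T_x M_\xi \psi_0$ and verifying that the Gaussian window only gets replaced by another admissible Gabor window (plus a Gaussian of possibly complex covariance), so that Definition~\ref{def:WFG}, which is window-independent, still applies. Once this covariance is established, the rest of the proof is immediate.
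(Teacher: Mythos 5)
Your proof is correct and follows essentially the route the authors intend: the paper only cites this result from \cite[Proposition~4.16 and Corollary~5.4]{Cappiello3}, where it is obtained exactly as you do, by factoring $\cK=b^w(x,D)\mu$ via Theorem~\ref{thm:repFIO} and combining the $Q^s$-boundedness \eqref{eq:shubinpsdocont} and microlocality of Shubin pseudodifferential operators with the $Q^s$-invariance and symplectic covariance $\WF(\mu v)=\chi\,\WF(v)$ of metaplectic operators (cf.\ the paper's own Remark after Proposition~\ref{prop:FIOWFpropagator}). The only cosmetic point is that $v_s\circ\chi$ is an elliptic element of $\Gamma^{s}$ (not merely $\Gamma^{|s|}$), which is what the parametrix argument for the two-sided $Q^s$ estimate actually uses.
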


\section{Parametrix and propagator}\label{sec:application}

Consider the initial value Cauchy problem associated with a real homogeneous quadratic form $q \in \Gamma^2$ 
defined by $q(x,\xi) = \la (x,\xi), Q (x,\xi) \ra$ where $(x,\xi) \in \rr {2d}$ and $Q \in \M_{2d \times 2d}(\ro)$ is symmetric, 
and a negative order complex-valued perturbation $p \in \Gamma^{-\delta}$ where $\delta>0$. 
\begin{equation}\label{eq:cp}
\tag{CP}
	\left\{
	\begin{array}{rl}
	\partial_t u(t,x) + i(q^w(x,D)+p^w(x,D)) u (t,x) & = 0, \qquad t > 0, \quad x \in \rr d, \\
	u(0,\cdot) & = u_0\in \cS'(\rr d).  
	\end{array}
	\right.
\end{equation}
\subsection{The free evolution}
We first discuss the solution operator (propagator) in the unperturbed case $p=0$. 
First we treat the propagator as a group on $L^2(\rr d)$, then on $Q^s(\rr d)$. 

Thus we consider $q^w(x,D)$ as an unbounded operator in $L^2(\rr d)$. 
The closure of $-i q^w(x,D)$ equipped with the domain $\cS$ equals its maximal realization, denoted $M_q$ 
\cite[pp.~425--26]{Hormander2}. 
The closure generates a strongly continuous group $\ro \ni t \mapsto e^{-it q^w(x,D)}$ of unitary operators on $L^2$. 
The group gives the unique solution $e^{-it q^w(x,D)} u_0 \in
C([0,\infty), L^2) \cap C^1((0,\infty),L^2)$
for $u_0 \in D( M_q ) \subseteq L^2$, see \cite[Theorem~4.1.3]{Pazy1}. 

The propagator is a time-parametrized group of metaplectic operators, given for $t \in \ro$ by
\begin{equation}\label{eq:metaplecticpropagator}
e^{-it q^w(x,D)} = \mu_t \in \Mp(d), \quad t \in \ro
\end{equation}
(see e.g. \cite{Cappiello3,deGosson2,Folland1,CGNR,PRW1}). 
In fact consider the one-parameter group of symplectic matrices $\chi_t =e^{2t F} \in \Sp(d,\ro)$ where $F = \J Q \in \M_{2d \times 2d}(\ro)$. 
By the unique path lifting theorem (cf. \cite[Corollary~355]{deGosson2}), 
there is a unique continuous lifting of $\ro \ni t \mapsto \chi_t \in \Sp(d,\ro) $ into 
$\ro \ni t \mapsto \mu_t \in \Mp(d)$ such that $\pi(\mu_t) = \chi_t$ for $t \in \ro$ and $\mu_0 = I$. 
By \cite[Corollary~355]{deGosson2} $\mu_t$ satisfies \eqref{eq:metaplecticpropagator}. 

\begin{rem}
Williamson's symplectic diagonalization theorem (see e.g. \cite[Theorem~93]{deGosson2}) implies that 
if $Q  \in \M_{2d \times 2d}(\ro)$ is (strictly) positive definite then there exists a matrix $\chi \in \Sp(d,\ro)$ such that 
\begin{equation*}
\chi^t Q \chi = 
\left(
\begin{array}{ll}
\Lambda & 0 \\
0 & \Lambda
\end{array}
\right)
\end{equation*}
where $\Lambda = \diag(\lambda_1,\cdots, \lambda_d)$
with $\lambda_j, j=1,\ldots, d$ positive numbers such that $\{\pm i \lambda_j\}_{j=1}^d$ are eigenvalues of $F$. 
This gives
\begin{equation*}
(q \circ \chi) (x,\xi) = \la \chi (x,\xi), Q \chi (x,\xi) \ra
= \sum_{j=1}^d \lambda_j (x_j^2 + \xi_j^2)
\end{equation*}
and thus 
\begin{equation}\label{eq:sumharmonicoscillator}
(q \circ \chi)^w(x,D) = \sum_{j=1}^d \lambda_j (x_j^2 - \partial_j^2)
\end{equation}
which is a weighted sum of one-dimensional harmonic oscillators. 
Picking $\mu \in \Mp(d)$ such that $\chi = \pi(\mu)$, and conjugating 
the equation $\partial_t u(t,x) + i q^w(x,D) u (t,x) = 0$ in the $x$ variable 
with $\mu$ using \eqref{eq:metaplecticoperator}, 
leads to the modified Hamiltonian \eqref{eq:sumharmonicoscillator}. 
We will not pursue this direction, however, since our main concern is to
express our results using $\chi_t \in \Sp(d,\ro)$, and the relation between $\chi$ and $\chi_t$
is not transparent.
\end{rem}

Next we fix $s \in \ro$ and consider $q^w(x,D)$ as an unbounded operator in $Q^s(\rr d)$. In this case $\mu_t$ is in general no longer unitary but we still have the following result.

\begin{prop}\label{prop:semigroupQs}
For $s \in \ro$ the group $\ro \ni t \mapsto \mu_t$ is a strongly continuous group of operators on $Q^s(\rr d)$ whose generator is a closed extension of $- i q^w(x,D)$, considered as an unbounded operator in $Q^s(\rr d)$ with domain $\cS(\rr d)$. 
\end{prop}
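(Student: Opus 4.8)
The plan is to transfer the known good behaviour of $\mu_t$ on $L^2(\rr d)$ to each $Q^s(\rr d)$ by conjugating with the isomorphism $A_s = A_{v_s} \colon Q^s(\rr d) \to L^2(\rr d)$, and then to identify the generator via the $L^2$-theory and the Shubin calculus. The first step is to write, for each $t$, the operator $\mu_t^{(s)} := A_s \mu_t A_s^{-1}$ acting on $L^2(\rr d)$, and to show that $\ro \ni t \mapsto \mu_t^{(s)}$ is a strongly continuous group of bounded operators on $L^2(\rr d)$. Boundedness follows from Theorem~\ref{thm:repFIO} and Proposition~\ref{prop:mapprop}: $\mu_t \in \cI^0(\chi_t)$, hence $\mu_t \colon Q^s \to Q^s$ is continuous, so $\mu_t^{(s)}$ is bounded on $L^2$ with norm controlled by seminorms of an associated Shubin symbol. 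The group property $\mu_{t_1+t_2}^{(s)} = \mu_{t_1}^{(s)} \mu_{t_2}^{(s)}$ and $\mu_0^{(s)} = I$ are immediate from the corresponding identities for $\mu_t$. For strong continuity on $L^2$, by the uniform boundedness principle it suffices to check $\mu_t^{(s)} f \to f$ in $L^2$ as $t \to 0$ for $f$ in a dense subspace, and $\cS(\rr d)$ is a natural choice: on $\cS(\rr d)$ the map $t \mapsto \mu_t f$ is continuous into $\cS(\rr d)$ (each $\mu_t$ is a homeomorphism of $\cS$, and continuity in $t$ follows from the explicit quadratic-phase/metaplectic description together with continuity of $t \mapsto \chi_t$), and $A_s$ maps $\cS$ continuously into $\cS \subseteq L^2$. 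Transferring back, $\ro \ni t \mapsto \mu_t$ is then a strongly continuous group on $Q^s(\rr d)$.

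The second step is to identify the generator. Let $B_s$ denote the generator of $t \mapsto \mu_t$ on $Q^s$; by the general theory $B_s$ is closed and densely defined. For $f \in \cS(\rr d)$ we compute $\lim_{t \to 0} t^{-1}(\mu_t f - f)$ in $Q^s$. Since $t \mapsto \mu_t f$ is $C^1$ into $\cS(\rr d)$ with derivative at $0$ equal to $-i q^w(x,D) f$ — this is exactly the content of the $L^2$-generator statement combined with the regularising fact that the difference quotients stay in a bounded set of $\cS$, using \eqref{eq:SQs} — the limit exists in $Q^s$ and equals $-i q^w(x,D) f$. Hence $\cS(\rr d) \subseteq D(B_s)$ and $B_s|_{\cS} = -i q^w(x,D)$, so $B_s$ is a closed extension of $-i q^w(x,D)$ viewed as an unbounded operator on $Q^s$ with domain $\cS(\rr d)$. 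That is precisely the assertion.

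The main obstacle I anticipate is the strong continuity at $t=0$, more precisely the claim that $t \mapsto \mu_t f$ is continuous (indeed $C^1$) into $\cS(\rr d)$ for $f \in \cS(\rr d)$, with the correct derivative. One clean way to handle this is to use the factorisation $\mu_t = b_t^w(x,D)\, \mu$ from Theorem~\ref{thm:repFIO} with a fixed $\mu$ and $b_t \in \Gamma^0$ depending smoothly on $t$ (the smooth dependence coming from the smooth dependence of $\chi_t = e^{2tF}$ on $t$ and the explicit construction behind Theorem~\ref{thm:repFIO}), together with \eqref{eq:shubinpsdocont}: continuity of $t \mapsto b_t$ in $\Gamma^0$ gives continuity of $t \mapsto b_t^w(x,D)$ in $\cL(Q^s, Q^s)$ uniformly, and the $\cS$-continuity of $\mu$ closes the argument. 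Alternatively, one can bypass this by noting that strong continuity of a bounded group on a reflexive Banach space is automatic once it is weakly continuous, and weak continuity against the dual pairing is comparatively easy to verify directly from $\mu_t^* = \mu_{-t}$ (on $L^2$) and duality of the $Q^s$ scale; I would mention this as a fallback. The generator identification itself is then routine given \eqref{eq:SQs} and the $L^2$-result quoted before the proposition.
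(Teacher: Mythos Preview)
Your overall strategy --- transfer to $L^2$ via $A_s$, establish local uniform boundedness, check strong continuity on the dense subspace $\cS$, then identify the generator --- coincides with the paper's. The gap is in your proposed proof of strong continuity. The factorisation $\mu_t = b_t^w(x,D)\,\mu$ with a \emph{fixed} $\mu\in\Mp(d)$ and $b_t\in\Gamma^0$ varying with $t$ is not available from Theorem~\ref{thm:repFIO}: that theorem requires $\pi(\mu)=\chi_t$, and since $\chi_t=e^{2tF}$ genuinely varies with $t$ the metaplectic factor cannot be held fixed. Equivalently, $\mu_t\mu^{-1}$ is a Shubin pseudodifferential operator only when $\pi(\mu)=\chi_t$, i.e.\ for a single value of $t$. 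So your primary route to continuity of $t\mapsto\mu_t f$ in $\cS$ does not work as written, and the fallback via weak continuity is left undeveloped.

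The paper's device is metaplectic covariance \eqref{eq:metaplecticoperator} rather than Theorem~\ref{thm:repFIO}. Writing $A_s=a^w(x,D)$ with $a\in\Gamma^s$ elliptic, one has $A_s\mu_t=\mu_t(a\circ\chi_t)^w(x,D)$, so
\[
A_s(\mu_t-I)f=\mu_t(a\circ\chi_t-a)^w(x,D)f+(\mu_t-I)a^w(x,D)f.
\]
The second term goes to $0$ in $L^2$ by the known $L^2$-strong continuity; for the first, the key technical fact (the ingredient missing from your argument) is that $a\circ\chi_t-a\to0$ in $\Gamma^{s+\nu}$ for any $\nu>0$ as $t\to0$, a Shubin analogue of \cite[Proposition~18.1.2]{Hormander0}. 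Combined with \eqref{eq:shubinpsdocont} and unitarity of $\mu_t$ on $L^2$, this yields $\|(\mu_t-I)f\|_{Q^s}\to0$ for $f\in\cS$, and density plus the uniform bound finish the job.
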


\begin{proof}
By \cite[Prop.~400]{deGosson2}, $\mu_t$ is for fixed $t \in \ro$ a homeomorphism on $Q^s$. 
First we prove a uniform bound for $\| \mu_t \|_{\cL(Q^s)}$ over $-T \leqs t \leqs T$ where $T>0$.

By \eqref{eq:localizationweyl} we have $A_s = a^w(x,D)$ where $a(z) = \pi^{-d} (e^{-|\cdot|^2} * v_s)(z)$ with $z\in\rr{2d}$.  
This implies $a \in \Gamma^s$
and $a$ is elliptic by \cite[Proposition~2.3]{SW2}. 
Since $\|\mu_t f \|_{L^2} = \| f \|_{L^2}$ for all $t \in \ro$ and $f \in L^2$ we obtain for $u \in Q^s$ using 
\eqref{eq:metaplecticoperator} 
\begin{align*}
\| \mu_t u\|_{Q^s} 
& = \| A_s \mu_t u\|_{L^2} \\
& = \|\mu_t \mu_t^{-1} a^w (x,D) \mu_t u\|_{L^2} \\
& = \| (a \circ \chi_t)^w (x,D) u\|_{L^2} \\
& \leqs \| ( a \circ \chi_t )^w (x,D) A_s^{-1} \|_{\cL(L^2)} \| A_s u\|_{L^2} \\
& = \| ( a \circ \chi_t )^w (x,D) A_s^{-1} \|_{\cL(L^2)}  \| u \|_{Q^s}. 
\end{align*}
Indeed, by \cite[Proposition~1.7.12]{Nicola1} the inverse of $A_s$ exists and $A_s^{-1} = b^w(x,D)$ where $b \in \Gamma^{-s}$.
We have
\begin{equation}\label{eq:symbolestimparam}
\left| \pd \alpha (a \circ \chi_t) (z) \right| \leqs C_\alpha e^{2 |t| \, \| F \| (|s|+ 2 |\alpha|)} \eabs{z}^{s-|\alpha|}, \quad z \in \rr {2d}, \quad \alpha \in \nn {2d}.  
\end{equation}
The set of symbols $a \circ \chi_t \in \Gamma^s$ is thus uniformly bounded over $t \in [-T,T]$. 
Hence $(a \circ \chi_t) \wpr b \in \Gamma^0$ is uniformly bounded over $t \in [-T,T]$. 
By the Calder\'on--Vaillancourt theorem (see e.g. \cite[Theorem~2.73]{Folland1}) $\| ( a \circ \chi_t )^w (x,D) A_s^{-1} \|_{\cL(L^2)} < \infty$ uniformly over $t \in [-T,T]$. 
We have shown 
\begin{equation}\label{eq:unifmutQs}
\sup_{|t| \leqs T} \|\mu_t \|_{\cL(Q^s)} < \infty, \quad s \in \ro. 
\end{equation}

Next let $f \in \cS$ and write as above
\begin{align*}
A_s ( \mu_t - I ) f
& = \mu_t ( a \circ \chi_t )^w (x,D) f - a^w(x,D) f \\
& = \mu_t ( a \circ \chi_t -a)^w (x,D) f + (\mu_t - I) a^w(x,D) f.
\end{align*}
Since $\mu _t$ is unitary on $L^2$
we obtain for $|t| \leqs 1$ 
\begin{align*}
\| ( \mu_t - I ) f \|_{Q^s}
& = \| A_s ( \mu_t - I ) f \|_{L^2} \\
& \leqs \| ( a \circ \chi_t -a)^w (x,D) f \|_{L^2 } + \| (\mu_t - I) a^w(x,D) f \|_{L^2}. 
\end{align*}
We have $a \circ \chi_t -a \to 0$ in $\Gamma^{s+\nu}$ as $t \to 0$ provided $\nu> 0 $. 
To wit, this follows from the proof of \cite[Proposition~18.1.2]{Hormander0} modified from H\"ormander to Shubin symbols. 
This gives  
\begin{equation*}
\lim_{t \to 0} \frac{\| ( a \circ \chi_t -a)^w (x,D) f \|_{L^2 }}{\| f \|_{Q^{s+\nu}}}
= 0. 
\end{equation*}
Combining with the known strong continuity of $\mu_t$ on $L^2$
we have shown 
\begin{equation*}
\lim_{t \to 0} \| (\mu_t -I) f \|_{Q^s} =0, \quad f \in \cS(\rr d). 
\end{equation*}
Finally, combining \eqref{eq:unifmutQs} with the fact that $\cS$ is dense in $Q^s$ we can use \cite[Proposition I.5.3]{Engel1} to conclude that $\mu_t$ is a strongly continuous group on $Q^s$. 

Consider finally the final statement of the proposition: The generator of the group $\mu_t$ acting on $Q^s$
is a closed extension of $- i q^w(x,D)$, considered as an unbounded operator on $Q^s(\rr d)$ with domain $\cS(\rr d)$. 
This claim is a consequence of \cite[Theorem~1.2.4 and Corollary~1.2.5]{Pazy1}. 
\end{proof}

Both sides of the equality \eqref{eq:metaplecticpropagator} may thus be interpreted as a strongly continuous group on $Q^s$. 
The operators $\mu_t$ are not necessarily unitary if $s \neq 0$. 
Due to \eqref{eq:SQs} we may allow $u_0 \in \cS'(\rr d)$. 
In fact for some $s \in \ro$ we then have $u_0 \in Q^{s+2}$. 
The group $\mu_t$ acting on $Q^s$ has a closed generator that is an extension of $- i q^w(x,D)$
considered an unbounded operator in $Q^s(\rr d)$ with domain $\cS(\rr d)$. 
Abusing notation we denote also the generator by $- i q^w(x,D)$.
It follows from \cite[Definition~1.1.1]{Pazy1} and \eqref{eq:shubinpsdocont} that $Q^{s+2} \subseteq D( q^w(x,D) )$. 
Again \cite[Theorem~4.1.3]{Pazy1} implies that $\mu_t u_0$ is the unique solution to \eqref{eq:cp} in 
$C([0,\infty), Q^s) \cap C^1((0,\infty), Q^s)$.
We summarize: 
\begin{prop}
For $s \in \ro$ the equation \eqref{eq:cp} with $p=0$ is solved uniquely by the strongly continuous group of operators $e^{-itq^w(x,D)} = \mu_t$ on $Q^s(\rr d)$, and for each $t \in \ro$ it is an FIO in $\cI^0(\chi_t)$. We have for $u_0 \in Q^{s+2}$ the unique solution 
\begin{equation*}
e^{-itq^w(x,D)}u_0 \in C([0,\infty), Q^s) \cap C^1((0,\infty), Q^s). 
\end{equation*}
\end{prop}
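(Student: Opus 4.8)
The plan is to assemble the final proposition from the pieces already established in Proposition~\ref{prop:semigroupQs} and the abstract semigroup theory of \cite{Pazy1}, together with the FIO representation recorded above. The only genuinely new content is the identification $\mu_t \in \cI^0(\chi_t)$ and the regularity/uniqueness statement for the Cauchy problem; everything else is bookkeeping.

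First I would record that by Proposition~\ref{prop:semigroupQs}, for each fixed $s\in\ro$ the family $(\mu_t)_{t\in\ro}$ is a strongly continuous group on $Q^s(\rr d)$ whose generator, call it $G_s$, is a closed extension of $-iq^w(x,D)$ defined on the core $\cS(\rr d)$. As remarked in the excerpt following the proposition, $Q^{s+2}\subseteq D(G_s)$ because $q^w(x,D):Q^{s+2}\to Q^s$ is continuous by \eqref{eq:shubinpsdocont} and hence the graph-norm closure of $\cS$ inside $Q^{s+2}$ lies in the domain of the closed operator $G_s$; this uses $\cS$ dense in $Q^{s+2}$ together with \eqref{eq:SQs}. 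Then I invoke \cite[Theorem~4.1.3]{Pazy1}: for $u_0\in D(G_s)$ the map $t\mapsto \mu_t u_0$ is the unique function in $C([0,\infty),Q^s)\cap C^1((0,\infty),Q^s)$ (in fact on all of $\ro$) solving $\partial_t u = G_s u$, $u(0)=u_0$. Restricting to $u_0\in Q^{s+2}\subseteq D(G_s)$ and noting that $G_s u_0 = -iq^w(x,D)u_0$ there (since $Q^{s+2}$ is contained in the domain on which $G_s$ agrees with the pseudodifferential operator), this gives exactly the asserted unique solution to \eqref{eq:cp} with $p=0$.

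Next I would address the FIO claim. By \eqref{eq:metaplecticpropagator} and the construction via the unique path lifting theorem, $\mu_t\in\Mp(d)$ with $\pi(\mu_t)=\chi_t=e^{2tF}\in\Sp(d,\ro)$ for every $t\in\ro$. Applying Theorem~\ref{thm:repFIO} with the trivial amplitude $b\equiv 1\in\Gamma^0$, we get $1^w(x,D)\mu_t = \mu_t\in\cI^0(\chi_t)$; equivalently, the kernel of $\mu_t$ is of the form $C_\fy\mu_t$-type with trivial amplitude as noted in the excerpt. Hence $\mu_t\in\cI^0(\chi_t)$ for each $t\in\ro$, in particular for $t\geqs 0$.

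Finally I would assemble the statement: for the stated $s\in\ro$, $e^{-itq^w(x,D)}=\mu_t$ solves \eqref{eq:cp} with $p=0$ uniquely in the class $C([0,\infty),Q^s)\cap C^1((0,\infty),Q^s)$ for initial data $u_0\in Q^{s+2}$, and $\mu_t\in\cI^0(\chi_t)$ for each $t$. I expect the main (mild) obstacle to be the clean justification that $Q^{s+2}\subseteq D(G_s)$ \emph{and} that $G_s$ restricted to $Q^{s+2}$ coincides with $-iq^w(x,D)$ — i.e. that one has not merely a closed extension but agreement on this larger domain. This follows because $-iq^w(x,D)$ with domain $\cS$ is closable as an operator $Q^{s+2}\to Q^s\hookrightarrow Q^s$, its closure is contained in $G_s$, and on $Q^{s+2}$ the operator $-iq^w(x,D)$ is already continuous into $Q^s$ hence equals its own closure there; combining these inclusions yields equality of $G_s$ and $-iq^w(x,D)$ on $Q^{s+2}$. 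The rest is a direct citation chain to \cite{Pazy1,Engel1}.
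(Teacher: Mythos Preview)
Your proposal is correct and follows essentially the same route as the paper: invoke Proposition~\ref{prop:semigroupQs} for the $C_0$-group on $Q^s$, argue $Q^{s+2}\subseteq D(G_s)$ with $G_s$ agreeing with $-iq^w(x,D)$ there, cite \cite[Theorem~4.1.3]{Pazy1} for existence/uniqueness, and read off $\mu_t\in\cI^0(\chi_t)$ from Theorem~\ref{thm:repFIO} with $b=1$. The paper is terser on the domain point---it simply declares the abuse of notation and appeals to \cite[Definition~1.1.1]{Pazy1} together with \eqref{eq:shubinpsdocont}---whereas you spell out the closure argument explicitly; your version is the cleaner justification of exactly the step the paper glosses over.
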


\subsection{Construction of a parametrix to the perturbed equation}

We will now consider \eqref{eq:cp} with a nonzero complex-valued perturbation $p \in \Gamma^{-\delta}$.
As a first step we note that the perturbation operator is bounded $p^w(x,D): Q^s(\rr d) \to Q^{s+\delta} (\rr d)$ 
and compact $p^w(x,D): Q^s(\rr d) \to Q^s (\rr d)$ \cite[Proposition~25.4]{Shubin1}.
Perturbation theory (see e.g. \cite{CGNR}, \cite[Theorems~III.1.3 and III.1.10]{Engel1}) gives the following conclusion. 

Let $s \in \ro$. The solution to \eqref{eq:cp} for $u_0 \in Q^{s+2}(\rr d)$ is $T_t u_0$ where
\begin{equation}\label{eq:propagator1}
T_t = \mu_t C_t, \quad t \geqs 0.  
\end{equation}
Here $C_t$ is a strongly continuous semigroup of operators on $Q^s(\rr d)$ with operator norm estimate
\begin{equation}\label{eq:Ctopnorm}
\| C_t \|_{\cL(Q^s)} \leqs M e^{t \left( \omega + M \| p^w(x,D) \|_{\cL(Q^s)} \right)}, \quad t \geqs 0, 
\end{equation}
where $M \geqs 1$, $\omega \geqs 0$,
and
\begin{equation*}
C_t = \mathrm{id}+\sum_{n=1}^\infty (-i)^n\int_0^t\int_0^{t_1}\cdots \int_0^{t_{n-1}} P_{t_1} \cdots P_{t_n}\,\dd t_n\cdots\dd t_1
\end{equation*}
with convergence in the $\cL(Q^s)$ norm. 
In this formula $P_t=p_t^w(x,D)=(p \circ \chi_t)^w(x,D)$. 
The integrals are Bochner integrals of operator-valued functions. 
The propagator \eqref{eq:propagator1} is a strongly continuous semigroup of operators on $Q^s$. 

By \cite[Corollary~11.2.6 and Lemma~11.3.3]{Grochenig1} the $Q^s$ norms for $s \geqs 0$ are a family of seminorms for $\cS(\rr d)$. 
Thus $C_t: \cS \to \cS$ is continuous. 

We show that $C_t$ is a pseudodifferential operator. First we use results in \cite{CGNR} to prove that $C_t$ has a pseudodifferential operator symbol in a space larger than $\Gamma^0$.
By \cite[Remark~2.18]{Holst1} we have 
\begin{equation*}
\Gamma^{-\delta} \subseteq \Gamma_0^0 = \bigcap_{s \geqs 0} M_{1 \otimes v_s}^{\infty,1} (\rr {2d})
\end{equation*}
where $M_{1 \otimes v_s}^{\infty,1}$ denotes a Sj\"ostrand  modulation space \cite{Grochenig1}
with the weight $v_s (z)$, $z \in \rr {2d}$, and
where $\Gamma_0^0$ denotes the space of smooth symbols whose derivatives are in $L^\infty$. 
From \cite[Theorem~4.1]{CGNR} it follows that 
$C_t = c_t^w(x,D)$ where 
\begin{equation}\label{eq:symbolmodulation}
c_t \in \bigcap_{s \geqs 0} M_{1 \otimes v_s}^{\infty,1} = \Gamma_0^0, \qquad t \geqs 0.  
\end{equation}
By duality $c_t^w(x,D)$ extends uniquely to a continuous operator on $\cS'(\rr d)$. 

The outcome of this argument is that the propagator \eqref{eq:propagator1} is of the form
\begin{equation}\label{eq:propagator2}
T_t = \mu_t c_t^w(x,D), \quad t \geqs 0.  
\end{equation}
If $u_0 \in \cS'(\rr d)$ then $u_0 \in Q^{s+2}$ for some $s \in \ro$.  
Again, by \cite[Theorem~4.1.3]{Pazy1}, $T_t u_0$ is the unique solution to \eqref{eq:cp} in $C([0,\infty), Q^s) \cap C^1((0,\infty), Q^s)$. 

Our objective is to improve \eqref{eq:symbolmodulation} into
\begin{equation}\label{eq:ctsymbol}
c_t \in \Gamma^0, \qquad t \geqs 0,
\end{equation}
which implies that the propagator $T_t$ is an FIO of order zero for all $t \geqs 0$. 
This improvement will prove Theorem \ref{thm:mainresult}. 

The strategy to prove \eqref{eq:ctsymbol} is as follows. We first construct an FIO parametrix $\{ \cK_t \}_{t \geqs 0}$ to the equation \eqref{eq:cp},
that is a family of operators $\{ \cK_t \}_{t \geqs 0}$, where $\cK_t \in \cI^0(\chi_t)$ for $t \geqs 0$, which satisfies 
\begin{equation}\label{eq:CPparametrix}
\left\{
\begin{array}{rl}
\partial_t \cK_t u_0 + i(q^w(x,D)+p^w(x,D)) \cK_t u_0 & = g(t), \qquad t > 0, \\
\cK_0 u_0 & = u_0, \quad u_0 \in \cS'(\rr d), 
\end{array}
\right.
\end{equation}
for a function $g \in C( [0,\infty), \cS(\rr d))$. (The function $g$ will turn out to depend on $u_0$.)
We then prove that $\cK_t - T_t = \cR_t$ is regularizing, which implies that $T_t=\cK_t - \cR_t \in \cI^0(\chi_t)$ is an FIO. 

Thus we start by proving the following result. 

\begin{thm}
\label{thm:parametrix}
The Cauchy problem \eqref{eq:cp} admits an FIO parametrix $\cK_t \in \cI^0( \chi_t )$ for $t \geqs 0$ such that $\cK_0 = I$.
\end{thm}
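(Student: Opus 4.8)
The plan is to construct $\cK_t$ as an \emph{ansatz} of the form $\cK_t = \mu_t b_t^w(x,D)$ with $b_t \in \Gamma^0$ to be determined, exploiting Theorem~\ref{thm:repFIO} which guarantees that such an operator automatically lies in $\cI^0(\chi_t)$ for each $t\geqs 0$, and $\cK_0 = I$ forces $b_0 = 1$. Plugging this ansatz into the equation \eqref{eq:CPparametrix} and using that $\partial_t \mu_t = -i q^w(x,D)\mu_t$ (on $\cS$, by the generator property from Proposition~\ref{prop:semigroupQs} and \eqref{eq:metaplecticpropagator}), the leading term $\partial_t(\mu_t b_t^w) + i q^w \mu_t b_t^w$ collapses to $\mu_t \partial_t b_t^w(x,D)$. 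Then conjugating the perturbation term through $\mu_t$ via the metaplectic covariance \eqref{eq:metaplecticoperator}, $i p^w(x,D) \mu_t = i \mu_t (p\circ\chi_t)^w(x,D)$, and multiplying \eqref{eq:CPparametrix} on the left by $\mu_t^{-1}$, the equation becomes, on the symbol level via the Weyl product,
\begin{equation*}
\partial_t b_t + i\, (p\circ\chi_t) \wpr b_t = 0, \qquad b_0 = 1.
\end{equation*}
This is a linear ODE in the Fr\'echet space $\Gamma^0$ (formally; one must be slightly careful since $(p\circ\chi_t)\wpr b_t \in \Gamma^{-\delta}$, so the forcing term actually lives in a smaller space, which is good).

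First I would record the uniform-in-$t$ bounds needed for the ODE: by \eqref{eq:symbolestimparam}-style estimates, $p\circ\chi_t$ is bounded in $\Gamma^{-\delta}$ uniformly for $t$ in compact intervals, and the Weyl product $\Gamma^{-\delta}\times\Gamma^0 \to \Gamma^{-\delta}$ is continuous, so the map $b \mapsto -i(p\circ\chi_t)\wpr b$ is a continuous linear operator on $\Gamma^0$ (indeed mapping into $\Gamma^{-\delta}$) with seminorm bounds locally uniform in $t$. Then I would solve the ODE by Picard iteration / the Neumann series
\begin{equation*}
b_t = 1 + \sum_{n=1}^\infty (-i)^n \int_0^t \int_0^{t_1}\cdots\int_0^{t_{n-1}} (p\circ\chi_{t_1}) \wpr \cdots \wpr (p\circ\chi_{t_n}) \, \dd t_n \cdots \dd t_1,
\end{equation*}
and show it converges in each seminorm of $\Gamma^0$ (in fact in $\Gamma^{-\delta}$ for the tail $n\geqs 1$): for each fixed seminorm, the $\Gamma^{-\delta}$-seminorm of an $n$-fold Weyl product of symbols drawn from a bounded set in $\Gamma^{-\delta}$ is dominated by $C^n$ times a fixed seminorm (continuity of $\wpr$ applied $n-1$ times, with the extra decay only helping), and the iterated time integral contributes $t^n/n!$, giving an entire majorant. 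This produces $b_t \in \Gamma^0$ depending continuously (indeed smoothly) on $t\geqs 0$, with $b_t - 1 \in \Gamma^{-\delta}$.

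Having constructed $b_t$, I would set $\cK_t := \mu_t b_t^w(x,D)$; by Theorem~\ref{thm:repFIO} (converse direction) $\cK_t \in \cI^0(\chi_t)$, and $\cK_0 = \mu_0 \cdot 1 = I$. It remains to verify \eqref{eq:CPparametrix} rigorously rather than formally: differentiating $t \mapsto \cK_t u_0$ for $u_0 \in \cS'(\rr d)$, justifying that $\partial_t(\mu_t b_t^w(x,D)) = -iq^w(x,D)\mu_t b_t^w(x,D) + \mu_t (\partial_t b_t)^w(x,D)$ as an identity of operators $\cS'(\rr d) \to \cS'(\rr d)$ (using strong continuity/differentiability of $\mu_t$ on each $Q^s$ from Proposition~\ref{prop:semigroupQs}, term-by-term differentiability of the series for $b_t$, and that $(\partial_t b_t)^w = -i((p\circ\chi_t)\wpr b_t)^w = -i(p\circ\chi_t)^w b_t^w$), and then substituting into \eqref{eq:cp}. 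The construction is arranged so that the "remainder" $g(t)$ is actually zero at the symbol level --- i.e. the ansatz solves the equation exactly, so $g \equiv 0$; but if one prefers to truncate the series at finite order to keep everything in a fixed bounded subset of $\Gamma^0$ and avoid convergence subtleties, then $g(t)$ is the $N$-th order tail applied to $u_0$, which lies in $C([0,\infty),\cS(\rr d))$ because that tail has symbol in $\Gamma^{-N\delta}$ with $N\delta$ arbitrarily large, hence is regularizing by \eqref{eq:SQs}.

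The main obstacle I anticipate is the rigorous justification of the differentiation and the ODE in the Fr\'echet space $\Gamma^0$: one must check that the Neumann series converges in the $\Gamma^0$ (or $\Gamma^{-\delta}$) topology together with all its term-by-term time derivatives, so that $t\mapsto b_t$ is genuinely $C^1$ into $\Gamma^0$ and $\partial_t b_t = -i(p\circ\chi_t)\wpr b_t$; and that quantization commutes with these limits, i.e. $b_t^w(x,D)$ and $\partial_t b_t^w(x,D)$ agree with the operator-norm objects appearing when one differentiates $\cK_t u_0$ in, say, $Q^s$. This requires combining the continuity of $\wpr$ on the Shubin scale, the uniform symbol bounds \eqref{eq:symbolestimparam}, the mapping properties \eqref{eq:shubinpsdocont}, and the semigroup regularity of $\mu_t$ on $Q^s$ from Proposition~\ref{prop:semigroupQs} --- none individually hard, but the bookkeeping to make the formal computation legitimate is where the real work lies.
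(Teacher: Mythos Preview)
Your overall strategy---the ansatz $\cK_t=\mu_t b_t^w(x,D)$, the reduction via metaplectic covariance to the symbol equation $\partial_t b_t + i\,p_t\wpr b_t=0$ with $p_t=p\circ\chi_t$, and the Dyson/Neumann iteration---matches the paper exactly. The gap is in how you pass from the individual terms $b_{t,n}$ to a single symbol $b_t\in\Gamma^0$.

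You assert that for each fixed seminorm the $\Gamma^{-\delta}$-seminorm of an $n$-fold Weyl product is bounded by $C^n$, so the series converges in $\Gamma^0$. This is not justified: continuity of $\wpr:\Gamma^{m_1}\times\Gamma^{m_2}\to\Gamma^{m_1+m_2}$ only gives $p_k(a\wpr b)\leqs C_k\,q_{\ell(k)}(a)\,q_{\ell(k)}(b)$ with $\ell(k)>k$ in general, so iterating $n$ times forces seminorm indices of order $n$ on the factors $p_{t_j}$, and the constants $C_\alpha$ in \eqref{eq:symbolestimparam2} then enter at growing $|\alpha|$. Without a sharp submultiplicativity bound you do not get a geometric majorant $C^n$; the factor $t^n/n!$ need not dominate. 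The ``extra decay'' $b_{t,n}\in\Gamma^{-n\delta}$ does not help at bounded $z$, where the series must still converge pointwise. Your fallback---truncate at a fixed $N$---does not rescue the argument either: the remainder $\partial_t b_t^{(N)}+i\,p_t\wpr b_t^{(N)}=-i\,p_t\wpr b_{t,N}$ lies only in $\Gamma^{-(N+1)\delta}$, which is \emph{not} regularizing for fixed $N$, so $g(t)=\mu_t r_t^w(x,D)u_0$ is not in $C([0,\infty),\cS(\rr d))$ for general $u_0\in\cS'(\rr d)$, contrary to what \eqref{eq:CPparametrix} requires.

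The paper resolves exactly this point by \emph{not} summing the series but taking an asymptotic (Borel-type) sum: it proves a parameter-dependent version of the standard asymptotic completeness lemma (Lemma~\ref{lem:sviluppo}) to produce $b_t\in C([0,T],\Gamma^0)$ with $b_t-\sum_{n\leqs N}b_{t,n}\in C([0,T],\Gamma^{-(N+1)\delta})$ for every $N$, and similarly for $\partial_t b_t$. Then $r_t=\partial_t b_t+i\,p_t\wpr b_t$ lies in $C([0,T],\Gamma^{-M})$ for all $M$, hence in $C([0,T],\cS(\rr{2d}))$, and the parametrix property follows with a genuinely smoothing $g$. In short: replace your convergence claim by an asymptotic summation argument, and the rest of your outline goes through.
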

The proof is carried out in several steps. 

\begin{lem}\label{lem:symbolparambound}
Let $T>0$ and $n \geqs 1$. 
The family of Weyl symbols 
\begin{equation*}
p_{t_1} \wpr \cdots \wpr p_{t_n} \in \Gamma^{-\delta n}, \quad t_j \in [0,T], \quad 1 \leqs j \leqs n, 
\end{equation*}
is uniformly bounded in $\Gamma^{-\delta n}$, and 
$
[0,T]^n \ni (t_1, t_2, \dots,t_n) \mapsto p_{t_1} \wpr  \cdots \wpr p_{t_n} \in \Gamma^{-(\delta - \nu) n}
$
is continuous for any $\nu>0$. 
\end{lem}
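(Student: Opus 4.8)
The plan is to reduce the statement to the continuity of the Weyl product together with a uniform control of the single factors $p_t := p\circ\chi_t$. Recall from Section~\ref{sec:prelim} that the Weyl product $\Gamma^{m_1}\times\Gamma^{m_2}\to\Gamma^{m_1+m_2}$ is continuous, so for each seminorm $\|\cdot\|_k$ on $\Gamma^{m_1+m_2}$ there is a seminorm $\|\cdot\|_{k'}$ with $\|a\wpr b\|_k\leqs C_k\|a\|_{k'}\|b\|_{k'}$; iterating, the $n$-fold product $\Gamma^{-\delta}\times\cdots\times\Gamma^{-\delta}\to\Gamma^{-\delta n}$ obeys an analogous estimate, and in particular sends bounded sets to bounded sets.

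\emph{Uniform boundedness of the factors.} Since $\chi_t=e^{2tF}$, for $t\in[0,T]$ one has $\|\chi_t\|,\|\chi_t^{-1}\|\leqs\Lambda_T:=e^{2T\|F\|}\geqs1$, hence $\Lambda_T^{-1}\eabs{z}\leqs\eabs{\chi_t z}\leqs\Lambda_T\eabs{z}$ for all $z\in\rr{2d}$. Differentiating the linear composition, $\partial^\alpha(p\circ\chi_t)$ is a linear combination of functions $(\partial^\beta p)\circ\chi_t$ with $|\beta|=|\alpha|$ and coefficients homogeneous of degree $|\alpha|$ in the entries of $\chi_t$, hence of size $\lesssim\Lambda_T^{|\alpha|}$; combining with $p\in\Gamma^{-\delta}$ gives $|\partial^\alpha(p\circ\chi_t)(z)|\lesssim\Lambda_T^{\delta+2|\alpha|}\eabs{z}^{-\delta-|\alpha|}$ with constants independent of $t\in[0,T]$. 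Thus $\{p_t:t\in[0,T]\}$ is bounded in $\Gamma^{-\delta}$, and by the remark above $\{p_{t_1}\wpr\cdots\wpr p_{t_n}:(t_1,\dots,t_n)\in[0,T]^n\}$ is bounded in $\Gamma^{-\delta n}$, which is the first assertion.

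\emph{Continuity of a single factor.} Next I would show that $[0,T]\ni t\mapsto p_t$ is continuous into $\Gamma^{-\delta+\nu}$ for every $\nu>0$ (in fact Lipschitz into $\Gamma^{-\delta}$, which is slightly more than needed). Writing $p_t-p_s=\int_s^t\frac{\dd}{\dd r}(p\circ\chi_r)\,\dd r$ with $\frac{\dd}{\dd r}(p\circ\chi_r)(z)=2\la(\nabla p)(\chi_r z),F\chi_r z\ra$, and noting that each component of $\nabla p$ lies in $\Gamma^{-\delta-1}$ while each component of $z\mapsto F\chi_r z$ lies in $\Gamma^1$, both with seminorms bounded uniformly over $r\in[0,T]$ by the chain-rule estimate from the previous step, the integrand is bounded in $\Gamma^{-\delta}$ uniformly in $r$; hence each $\Gamma^{-\delta}$-seminorm of $p_t-p_s$ is $O(|t-s|)$. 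This is the Shubin counterpart of the argument from the proof of \cite[Proposition~18.1.2]{Hormander0} already invoked in the proof of Proposition~\ref{prop:semigroupQs}.

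\emph{Continuity of the product.} Finally, for $(t_1^{(k)},\dots,t_n^{(k)})\to(t_1,\dots,t_n)$ in $[0,T]^n$ use the telescoping identity
\begin{equation*}
p_{t_1^{(k)}}\wpr\cdots\wpr p_{t_n^{(k)}}-p_{t_1}\wpr\cdots\wpr p_{t_n}=\sum_{j=1}^n\big(p_{t_1^{(k)}}\wpr\cdots\wpr p_{t_{j-1}^{(k)}}\big)\wpr\big(p_{t_j^{(k)}}-p_{t_j}\big)\wpr\big(p_{t_{j+1}}\wpr\cdots\wpr p_{t_n}\big).
\end{equation*}
In the $j$-th summand the prefix is bounded in $\Gamma^{-\delta(j-1)}$ uniformly in $k$ (first step), the suffix is a fixed element of $\Gamma^{-\delta(n-j)}$, and the middle factor tends to $0$ in $\Gamma^{-\delta+\nu}$ (second step); by the seminorm estimates for the Weyl product the $j$-th summand, hence the whole difference, tends to $0$ in $\Gamma^{-\delta n+\nu}$. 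Since $\Gamma^m\hookrightarrow\Gamma^{m'}$ continuously for $m\leqs m'$ and $-\delta n+\nu\leqs-(\delta-\nu)n$ when $n\geqs1$, this yields the claimed continuity into $\Gamma^{-(\delta-\nu)n}$ for any $\nu>0$. The one genuinely technical point is the second step — verifying that differentiation commutes with composition by $\chi_r$ without loss of symbol estimates, and that the vector-valued fundamental theorem of calculus applies in the Fréchet space $\Gamma^{-\delta}$ (equivalently, reproducing the Shubin version of \cite[Proposition~18.1.2]{Hormander0}); everything else is bookkeeping with the multilinearity and continuity of the Weyl product recorded in Section~\ref{sec:prelim}.
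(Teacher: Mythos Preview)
Your proposal is correct and follows essentially the same route as the paper: uniform bounds on $p_t$ via the chain rule, continuity of $t\mapsto p_t$ into a slightly larger Shubin class, and then the continuity of the Weyl product. The paper compresses your telescoping step into the single phrase ``a consequence of the continuity of the Weyl product,'' and for the continuity of the single factor it invokes pointwise continuity plus the uniform estimate \eqref{eq:symbolestimparam2} to get $p_t\in C([0,T],\Gamma^{-\delta+\nu})$, whereas you use the fundamental theorem of calculus to obtain the sharper Lipschitz continuity into $\Gamma^{-\delta}$; both arguments are standard and your version actually yields a bit more than is needed.
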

\begin{proof}
We have
\begin{equation}\label{eq:symbolestimparam2}
\left| \pd{\alpha}_{z} p_t (z) \right| \leqs C_\alpha e^{2 t \| F \| ( \delta + 2|\alpha|)} \eabs{z}^{-\delta-|\alpha|}, \quad z \in \rr {2d}, \quad \alpha \in \nn {2d}, 
\end{equation}
which proves that $p_t \in \Gamma^{-\delta}$ uniformly over $t \in [0,T]$. 
The estimates \eqref{eq:symbolestimparam2} combined with the continuity of $t \mapsto \pd{\alpha}_{z} p_t (z)$ also show that 
\begin{equation}\label{eq:symbolcont}
p_t \in C( [0,T],  \Gamma^{-\delta + \nu})
\end{equation}
if $\nu>0$. 
The result is thus a consequence of the continuity of the Weyl product on the spaces $\Gamma^m$ (see e.g. \cite[Theorem~1.2.16]{Nicola1}). 
\end{proof}

Fix $T>0$. For $t \in [0,T]$ we set $b_{t,0}=1$, and for $n \geqs 1$
\begin{equation}\label{eq:btndef}
b_{t,n} = (-i)^n\int_0^t\int_0^{t_1}\cdots \int_0^{t_{n-1}} p_{t_1} \wpr  \cdots \wpr p_{t_n} \,\dd t_n \cdots\dd t_1.
\end{equation}
In particular, $b_{0,n} = 0$ for $n \geqs 1$. 
By Lemma \ref{lem:symbolparambound}, $b_{t,n}$ makes sense as an integral 
and $b_{t,n} \in \Gamma^{- \delta n}$.  
From \eqref{eq:wignerweyl} it follows that integration commutes with the Weyl product so that 
\begin{equation*}
b_{t,n}^w(x,D) = (-i)^n\int_0^t\int_0^{t_1}\cdots \int_0^{t_{n-1}} P_{t_1} \cdots P_{t_n}\,\dd t_n\cdots\dd t_1, \quad n \geqs 0.
\end{equation*}

Using \eqref{eq:symbolestimparam2}, the recursion
\begin{equation}\label{eq:btrecursion}
b_{t,n} = -i \int_0^t p_{t_1} \wpr  b_{t_1,n-1} \dd t_1, \quad n \geqs 1, 
\end{equation}
and induction, one shows 
\begin{equation}\label{eq:btncont}
t \mapsto b_{t,n} \in C( [0,T], \Gamma^{- \delta n}), \qquad n \geqs 0.
\end{equation}
Hence
\begin{equation}\label{eq:uniformboundsymbol}
\left| \pd{\alpha}_z b_{t,n} (z) \right| \leqs C_\alpha  \eabs{z}^{-\delta n- |\alpha|}, \quad \alpha \in \nn {2d}, \quad 0 \leqs t \leqs T.  
\end{equation}
We also have for $n \geqs 1$
\begin{equation}\label{eq:uniformboundderivative}
\left| \pd{\alpha}_z \partial_t b_{t,n} (z) \right| \leqs C_\alpha  \eabs{z}^{-\delta n - |\alpha|}, \quad \alpha \in \nn {2d}, \quad 0 \leqs t \leqs T.  
\end{equation}

In fact \eqref{eq:btrecursion} gives for $n \geqs 1$ and $t>0$
\begin{equation}\label{eq:btident}
\partial_t b_{t,n} 
= -i \, p_t \wpr b_{t,n-1}. 
\end{equation}
Hence \eqref{eq:symbolcont} and \eqref{eq:btncont}
show that 
\begin{equation}\label{eq:dtbtncont}
t \mapsto \partial_t b_{t,n} \in C( [0,T], \Gamma^{- \delta n + \nu})
\end{equation}
provided $\nu>0$. 
(Note that the continuity extends to include the end points of the interval $[0,T]$.)
Lemma \ref{lem:symbolparambound} and \eqref{eq:uniformboundsymbol} imply that
$\partial_t b_{t,n} \in \Gamma^{- \delta n}$
uniformly over $t \in [0,T]$. This proves \eqref{eq:uniformboundderivative}. 
By differentiating \eqref{eq:btident} and using 
\begin{equation*}
\partial_t (p_t (z)) 
= \la (\nabla p) (\chi_t z), 2 F \chi_t z \ra \in \Gamma^{-\delta}
\end{equation*}
we also obtain 
\begin{equation}\label{eq:uniform2derivative}
\partial^2_t b_{t,n} \in \Gamma^{-\delta n}
\end{equation} 
uniformly over $t \in [0,T]$ for all $n \geqs 1$. 

We need the following technical result to sum the $b_{t,n}$ asymptotically.

\begin{lem} \label{lem:sviluppo}
Let $T>0$ and $a_{t,n} \in C([0,T], \Gamma^{m_n})$ where $(m_n)_{n \in \mathbb{N}}$ is a decreasing sequence tending to $-\infty$ as $n \to \infty$. Assume that $\partial_t a_{t,n} \in \Gamma^{m_n}$ uniformly over $t \in [0,T]$ for every $n \in \mathbb{N}.$ Then there exists a symbol function $t \mapsto a_t \in C([0,T], \Gamma^{m_0}) $ such that for every $N \in \mathbb{N} \setminus 0$ we have
\begin{equation*}
a_t - \sum_{n=0}^N a_{t,n} \in C([0,T], \Gamma^{m_{N+1}}).
\end{equation*}
We write
$\displaystyle a_{t} \sim \sum_{n=0}^\infty a_{t,n}. $
\end{lem}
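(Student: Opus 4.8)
The plan is to adapt the classical Borel-type summation argument for asymptotic expansions of symbols (as in H\"ormander's treatment of $S^m_{1,0}$, or its Shubin analogue in \cite{Shubin1,Nicola1}) to the present parameter-dependent situation, keeping track of uniformity and continuity in $t$. First I would fix a cutoff function $\varphi \in C^\infty(\rr{2d})$ with $\varphi(z) = 0$ for $|z| \leqs 1$ and $\varphi(z) = 1$ for $|z| \geqs 2$, and choose a sequence $\varepsilon_n \downarrow 0$ decreasing sufficiently fast. I would then set
\begin{equation*}
a_t(z) = \sum_{n=0}^\infty \varphi(\varepsilon_n z)\, a_{t,n}(z), \qquad z \in \rr{2d}, \quad t \in [0,T].
\end{equation*}
For each fixed $z$ only finitely many terms are nonzero, so $a_t(z)$ is well defined and smooth in $z$; the role of $\varphi(\varepsilon_n z)$ is to suppress the region near the origin where the lower-order terms $a_{t,n}$ need not decay, so that the tail sums converge in the relevant seminorms.

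The core estimate is the standard one: given the seminorm bounds $|\partial_z^\alpha a_{t,n}(z)| \lesssim \eabs{z}^{m_n - |\alpha|}$ (uniform in $t$), one checks that for $|z| \geqs 2/\varepsilon_n$ one has $\eabs{z}^{m_n} \leqs \varepsilon_n^{m_N - m_n} \eabs{z}^{m_N}$ whenever $n > N$ (using $m_n \leqs m_N$ for $n \geqs N$ and $\eabs{z}^{-1} \leqs \varepsilon_n/\sqrt{2} \cdot \text{const}$ there), and then choosing $\varepsilon_n$ small enough — depending on the finitely many seminorm constants involved up to order $n$ — forces
\begin{equation*}
\Big| \partial_z^\alpha \Big( \sum_{n=N+1}^\infty \varphi(\varepsilon_n z)\, a_{t,n}(z) \Big) \Big| \lesssim \eabs{z}^{m_{N+1} - |\alpha|}, \qquad z \in \rr{2d}, \quad |\alpha| \leqs N,
\end{equation*}
uniformly in $t \in [0,T]$. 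Since $\sum_{n=0}^N (\varphi(\varepsilon_n \cdot)\, a_{t,n} - a_{t,n})$ is compactly supported and smooth, it lies in $\cS(\rr{2d}) \subseteq \Gamma^{m_{N+1}}$, so indeed $a_t - \sum_{n=0}^N a_{t,n} \in \Gamma^{m_{N+1}}$ for each $t$, with uniform bounds over $t \in [0,T]$. In particular $a_t \in \Gamma^{m_0}$ uniformly in $t$.

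It remains to obtain the continuity $t \mapsto a_t \in C([0,T], \Gamma^{m_0})$, and more generally $t \mapsto a_t - \sum_{n=0}^N a_{t,n} \in C([0,T], \Gamma^{m_{N+1}})$. Here is where the hypothesis $\partial_t a_{t,n} \in \Gamma^{m_n}$ uniformly in $t$ is used: it upgrades the pointwise continuity of $t \mapsto a_{t,n}(z)$ to an equicontinuity in $t$ of $z \mapsto a_{t,n}(z)$ in $\Gamma^{m_n}$ seminorms (via the mean value theorem in $t$, $|a_{t,n}(z) - a_{t',n}(z)| \leqs |t-t'| \sup_s |\partial_s a_{s,n}(z)| \lesssim |t-t'| \eabs{z}^{m_n}$, and likewise for $z$-derivatives), so that $t \mapsto a_{t,n} \in C([0,T], \Gamma^{m_n - \eta})$ for every $\eta > 0$ — which is exactly why the statement only claims the limiting Fr\'echet-space continuity and the loss of $\nu$ appeared in \eqref{eq:symbolcont}, \eqref{eq:dtbtncont}. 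Applying this to each of the finitely many terms in the partial sum, and applying the tail estimate above (now to the difference $a_{t,n} - a_{t',n}$, whose seminorms are controlled by $|t-t'|$ times the uniform bounds) to the remainder, gives the claimed continuity. The main obstacle is bookkeeping: one must arrange the single sequence $\varepsilon_n$ to work simultaneously for all seminorm orders $|\alpha|$ and for both the size estimates and the $t$-increment (equicontinuity) estimates, but this is handled by the usual diagonal choice — at stage $n$ impose only the finitely many conditions involving orders $|\alpha| < n$ and the constants of $a_{t,n}$ and $\partial_t a_{t,n}$ — and no genuinely new difficulty arises beyond the classical construction.
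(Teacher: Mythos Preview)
Your proposal is correct and follows essentially the same route as the paper: both construct $a_t(z) = \sum_n \psi(\varepsilon_n z)\, a_{t,n}(z)$ via the standard Borel-type summation (the paper writes $\psi(z/r_n)$ with $r_n = 1/\varepsilon_n$ and takes $\psi_0 \equiv 1$), and both single out the mean value estimate $|\partial_z^\alpha(a_{t+s,n} - a_{t,n})(z)| \leqs |s|\,\sup_{0\leqs\theta\leqs 1} |\partial_z^\alpha \partial_t a_{t+\theta s,n}(z)|$ as the device that yields continuity in $t$ of the tail. Your parenthetical about needing a loss $\eta$ in the continuity of the individual $a_{t,n}$ is superfluous here, since $a_{t,n} \in C([0,T],\Gamma^{m_n})$ is already a hypothesis, but this does not affect the argument.
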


The proof is a variant of the proof of \cite[Proposition 3.5]{Shubin1} and other similar results for symbols depending on a parameter. The essential modification of the standard proof to obtain continuity is to use 
\begin{equation*}
\left| \pdd z \alpha \left( a_{t+s,n}(z)-a_{t,n}(z) \right) \right| 
\leqs |s|  \sup_{0 \leqs \theta \leqs 1}  \left| \pdd z \alpha \partial_t a_{t+\theta s,n}(z) \right|. 
\end{equation*}
We omit further details except for the statement that 
the symbol $a_t$ is constructed as
\begin{equation}\label{eq:btconstruction}
a_t = \sum_{n=0}^\infty \psi_n a_{t,n}
\end{equation}
where $\psi_0(z)=1$, and for $n \geqs 1$, $\psi_n(z)=\psi(z/r_n)$ where $\psi \in C^\infty(\rr {2d})$, $\psi(z)=0$ for $|z| \leqs 1$, $\psi(z)=1$ for $|z| \geqs 2$, 
and $(r_n)_{n \geqs 1}$ a sufficiently rapidly increasing sequence of positive reals. 
This shows that we can extend
Lemma \ref{lem:sviluppo} to take into account also higher order derivatives with respect to $t$ by modifying the constants $r_n$. 

Applying Lemma \ref{lem:sviluppo} to the sequences $(b_{t,n})_{n \geqs 0}$ defined in \eqref{eq:btndef} and $(\partial_t b_{t,n})_{n \geqs 1}$ simultaneously, and using
\eqref{eq:btncont}, \eqref{eq:uniformboundsymbol}, \eqref{eq:uniformboundderivative}, 
\eqref{eq:dtbtncont} and \eqref{eq:uniform2derivative} we obtain 
$b_t \in C([0,T], \Gamma^0)\cap C^1([0,T], \Gamma^{-\delta})$
such that 
\begin{equation*}
b_t \sim \sum_{n=0}^\infty b_{t,n}, \quad 
\partial_t b_t \sim \sum_{n=1}^\infty \partial_t b_{t,n}. 
\end{equation*}
Note that $b_0^w(x,D) = I$. 
 
\begin{lem}\label{lem:rtbtcont}
For $T>0$ we have
\begin{equation*}
r_t = \partial_t b_t + i p_t \wpr b_t \in C([0,T], \cS(\rr {2d})).
\end{equation*}
\end{lem}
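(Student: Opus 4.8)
The plan is to exploit the asymptotic relations $b_t \sim \sum_{n=0}^\infty b_{t,n}$ and $\partial_t b_t \sim \sum_{n=1}^\infty \partial_t b_{t,n}$ together with the recursion \eqref{eq:btident}, which encodes exactly the telescoping that forces $r_t$ to be smoothing. First I would fix $N \geqs 1$ and split
\begin{equation*}
r_t = \left( \partial_t b_t - \sum_{n=1}^N \partial_t b_{t,n} \right) + \sum_{n=1}^N \partial_t b_{t,n} + i p_t \wpr \left( b_t - \sum_{n=0}^{N-1} b_{t,n} \right) + i \, p_t \wpr \sum_{n=0}^{N-1} b_{t,n}.
\end{equation*}
By Lemma~\ref{lem:sviluppo} the first parenthesis lies in $C([0,T], \Gamma^{-\delta(N+1)})$ and the third in $C([0,T], \Gamma^{-\delta - \delta(N-1)}) = C([0,T],\Gamma^{-\delta N})$ after applying the Weyl product with $p_t \in \Gamma^{-\delta}$ (using continuity of the Weyl product, uniformly in $t$, via Lemma~\ref{lem:symbolparambound}).

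Next I would use \eqref{eq:btident}, i.e. $\partial_t b_{t,n} = -i\, p_t \wpr b_{t,n-1}$ for $n \geqs 1$, to rewrite $\sum_{n=1}^N \partial_t b_{t,n} = -i \sum_{n=1}^N p_t \wpr b_{t,n-1} = -i\, p_t \wpr \sum_{m=0}^{N-1} b_{t,m}$. Therefore the two "finite" pieces cancel:
\begin{equation*}
\sum_{n=1}^N \partial_t b_{t,n} + i\, p_t \wpr \sum_{n=0}^{N-1} b_{t,n} = 0,
\end{equation*}
and what remains is $r_t \in C([0,T], \Gamma^{-\delta N})$. Since $N$ was arbitrary and $\bigcap_{m} \Gamma^m = \cS(\rr{2d})$, this shows $r_t(z) \in \cS(\rr{2d})$ for each $t$, with all Shubin seminorm bounds holding uniformly on $[0,T]$.

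The remaining point is continuity of $t \mapsto r_t$ as a map into $\cS(\rr{2d})$, not merely boundedness: for this I would observe that in the decomposition above every summand is, for each fixed $N$, an element of $C([0,T], \Gamma^{-\delta(N-1)+\nu})$ for any $\nu > 0$ — the first and third parentheses by Lemma~\ref{lem:sviluppo}, and $\partial_t b_{t,n}$, $p_t \wpr b_{t,n-1}$ by \eqref{eq:dtbtncont}, \eqref{eq:symbolcont}, \eqref{eq:btncont} and continuity of the Weyl product. After the cancellation this yields $r_t \in C([0,T], \Gamma^{-\delta N + \nu})$ for every $N$, hence $r_t \in C([0,T], \cS(\rr{2d}))$ since the topology of $\cS(\rr{2d})$ is that of the projective limit of the $\Gamma^m$. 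The main obstacle is bookkeeping: one must make sure the cancellation identity is applied before passing to the limit in $N$ (the individual tails $b_t - \sum_{0}^{N-1} b_{t,n}$ only improve by $\delta$ per step, so no single $N$ suffices), and one must track the uniform-in-$t$ control of Weyl-product seminorms so that the $\nu$-loss in continuity does not accumulate — both are handled by the uniform bounds recorded in Lemma~\ref{lem:symbolparambound} and in \eqref{eq:uniformboundsymbol}–\eqref{eq:uniform2derivative}.
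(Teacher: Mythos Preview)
Your proof is correct and follows essentially the same approach as the paper: both split $r_t$ into the two tails $\partial_t b_t - \sum_{n=1}^N \partial_t b_{t,n}$ and $i\,p_t \wpr (b_t - \sum_{n=0}^{N-1} b_{t,n})$, use the recursion \eqref{eq:btident} to cancel the finite pieces, and then invoke Lemma~\ref{lem:sviluppo} together with \eqref{eq:symbolcont} to place each tail in $C([0,T],\Gamma^{-\delta(N+1)+\nu})$, whence $N$ arbitrary gives $C([0,T],\cS(\rr{2d}))$. (There is a harmless index slip in your order count for the third term: since $b_t - \sum_{n=0}^{N-1} b_{t,n} \in C([0,T],\Gamma^{-\delta N})$, the Weyl product with $p_t$ lands in $C([0,T],\Gamma^{-\delta(N+1)+\nu})$, not $\Gamma^{-\delta N}$; this only improves your estimate.)
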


\begin{proof}
Let $N \in \no \setminus 0$.  
By \eqref{eq:btident} and Lemma \ref{lem:sviluppo} we have 
\begin{equation*}
\partial_t b_t + i \sum_{n=1}^N p_t  \wpr b_{t,n-1} = \partial_t b_t -\sum_{n=1}^N \partial_t b_{t,n} \in C( [0,T], \Gamma^{-\delta(N+1)}). 
\end{equation*}
On the other hand we also observe that 
\begin{equation*}
i p_t  \wpr b_t - i \sum_{n=1}^N p_t  \wpr b_{t,n-1} 
= i p_t \wpr \left( b_t - \sum_{n=0}^{N-1} b_{t,n} \right)  \in C([0,T], \Gamma^{-\delta (N+1)+\nu})
\end{equation*}
again using Lemma \ref{lem:sviluppo} and \eqref{eq:symbolcont}. 
Since $N>0$ is arbitrary the claim follows. 
\end{proof}

\noindent \emph{Proof of Theorem \ref{thm:parametrix}.}
Weyl quantization of Lemma \ref{lem:rtbtcont} gives
\begin{equation*}
\partial_t b^w_t(x,D) = - i p_t^w(x,D) \, b_t^w(x,D) + r_t^w(x,D), \qquad t \geqs 0. 
\end{equation*}
For $u_0 \in \cS'(\rr d)$ we define $\cK_0 u_0 = u_0$ and for $t > 0$
\begin{equation*}
v_t = \cK_t u_0 = \mu_t b_t^w(x,D)u_0 \in \cS'(\rr d). 
\end{equation*}
For $t>0$ we have
\begin{equation}\label{eq:dtvt}
\begin{aligned}
i\partial_t v_t&=i\partial_t \left(\mu_t b_t^w(x,D)u_0\right) \\
&=q^w(x,D) v_t + \mu_t \left( p_t^w(x,D) \, b_t^w(x,D) + i r_t^w(x,D) \right)  u_0 \\
&=q^w(x,D) v_t + p^w(x,D) v_t + i \mu_t r_t^w(x,D) u_0.
\end{aligned}
\end{equation}
If we set 
\begin{equation*}
g(t) = \mu_t r_t^w(x,D) u_0
\end{equation*}
it remains to show $g \in C([0,\infty),\cS(\rr d))$. 
Writing 
\begin{equation*}
g(t+s) - g(t) = \mu_{t+s} \left(  r_{t+s}^w(x,D)  - r_t^w(x,D) \right) u_0
+  \mu_t \left(  \mu_s - I \right) r_t^w(x,D) u_0
\end{equation*}
the latter claim is a consequence of Proposition \ref{prop:semigroupQs}, Lemma \ref{lem:rtbtcont}, \eqref{eq:shubinpsdocont}, 
and the fact that by \eqref{eq:SQs} $u_0 \in Q^m$ for some $m \in \ro$. 
We have thus shown that $\cK_t$ is a parametrix to \eqref{eq:cp}. \hfill $\Box$

\subsection{The propagator to the perturbed equation as an FIO}
In order to show \eqref{eq:ctsymbol} it remains to connect the parametrix with the solution operator \eqref{eq:propagator2}
(cf. the proof of \cite[Proposition 3.1.1]{Helffer1}). 
Let $u_0 \in \cS'(\rr d )$. 
Then for some $s \in \ro$ we have $u_0 \in Q^{s+2}(\rr d) \subseteq D\big( q^w(x,D) + p^w(x,D)\big)$ 
where $q^w(x,D) + p^w(x,D)$ is considered an unbounded operator in $Q^s(\rr d)$. 
\begin{lem}\label{lem:Qscontinuity}
If $T>0$ then
\begin{equation}\label{eq:mutrtcontinuity2}
t \mapsto \mu_t r_t^w(x,D) u_0 \in C([0,T], Q^s), 
\end{equation}
\begin{equation}\label{eq:Ktcontinuity2}
t \mapsto v_t \in C([0,T], Q^s), \qquad \mbox{and}
\end{equation}
\begin{equation}\label{eq:Ktcontinuity3}
t \mapsto \partial_t v_t \in C((0,T), Q^s).
\end{equation}
\end{lem}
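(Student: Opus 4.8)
\emph{Plan.} The plan is to derive all three assertions from a single auxiliary observation: \emph{if $m \in \ro$, $c_t \in C([0,T],\Gamma^m)$, and $w \in Q^{\rho+m}(\rr d)$ for some $\rho \in \ro$, then $t \mapsto \mu_t c_t^w(x,D) w \in C([0,T],Q^\rho(\rr d))$.} This is proved by the two-term split
\begin{equation*}
\mu_{t+h} c_{t+h}^w(x,D) w - \mu_t c_t^w(x,D) w = \mu_{t+h} (c_{t+h}-c_t)^w(x,D) w + \mu_t (\mu_h - I) c_t^w(x,D) w ,
\end{equation*}
for $t, t+h \in [0,T]$. The first term tends to $0$ in $Q^\rho$ because the $Q^{\rho+m} \to Q^\rho$ operator norm of $(c_{t+h}-c_t)^w(x,D)$ is dominated by a finite linear combination of $\Gamma^m$ seminorms of $c_{t+h}-c_t$ (the remark after \eqref{eq:shubinpsdocont}), which vanish as $h \to 0$ by the assumed continuity of $t\mapsto c_t$, while $\|\mu_{t+h}\|_{\cL(Q^\rho)}$ stays bounded on $[0,T]$ by \eqref{eq:unifmutQs}. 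The second term tends to $0$ in $Q^\rho$ since the vector $c_t^w(x,D) w \in Q^\rho$ is fixed and $\mu_h \to I$ strongly on $Q^\rho$ as $h \to 0$, again using the uniform bound \eqref{eq:unifmutQs}; this is the same mechanism as in the proof of Proposition \ref{prop:semigroupQs}.

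Granting this observation, \eqref{eq:Ktcontinuity2} follows by taking $c_t = b_t \in C([0,T],\Gamma^0)$, $m = 0$, $w = u_0$; since $u_0 \in Q^{s+2}$ one may in fact choose $\rho = s+2$ and thereby obtain the sharper statement $t \mapsto v_t \in C([0,T],Q^{s+2})$. Likewise \eqref{eq:mutrtcontinuity2} follows by taking $c_t = r_t$, which by Lemma \ref{lem:rtbtcont} lies in $C([0,T],\Gamma^m)$ for every $m \in \ro$, then choosing $m$ so that $u_0 \in Q^{s+m}$ (possible by \eqref{eq:SQs}), $\rho = s$, $w = u_0$.

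For \eqref{eq:Ktcontinuity3} I would start from the identity \eqref{eq:dtvt}, which for $t > 0$ reads $\partial_t v_t = -i\, q^w(x,D) v_t - i\, p^w(x,D) v_t + \mu_t r_t^w(x,D) u_0$ in $Q^s(\rr d)$. The last summand is continuous into $Q^s$ on $[0,T]$ by \eqref{eq:mutrtcontinuity2}. For the first two summands I would use the sharpened form of \eqref{eq:Ktcontinuity2}, that is $t \mapsto v_t \in C((0,T),Q^{s+2})$, together with the boundedness of the $t$-independent operators $q^w(x,D): Q^{s+2} \to Q^s$ (valid since $q \in \Gamma^2$) and $p^w(x,D): Q^{s+2} \to Q^{s+2+\delta} \subseteq Q^s$ (since $p \in \Gamma^{-\delta}$), both from \eqref{eq:shubinpsdocont}. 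Composing the continuous $Q^{s+2}$-valued curve $t\mapsto v_t$ with a fixed bounded operator into $Q^s$ preserves continuity, so $t \mapsto \partial_t v_t \in C((0,T),Q^s)$.

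The step requiring the most care is the auxiliary observation itself: the propagators $\mu_t$ are not uniformly bounded on $Q^\rho$ over all of $\ro$ when $\rho \neq 0$, so one cannot appeal to unitarity and must restrict to the compact interval $[0,T]$, relying on \eqref{eq:unifmutQs} and the strong continuity from Proposition \ref{prop:semigroupQs}. A related subtlety is that \eqref{eq:Ktcontinuity3} genuinely needs the $Q^{s+2}$-version of \eqref{eq:Ktcontinuity2}, not just the $Q^s$-version, since the order-two operator $q^w(x,D)$ uses up exactly the two orders of Shubin--Sobolev regularity that the hypothesis $u_0 \in Q^{s+2}$ provides.
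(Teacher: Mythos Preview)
Your proof is correct and follows essentially the same route as the paper. The paper's argument is terser but relies on exactly the same two-term split $\mu_{t+h}c_{t+h}^w w - \mu_t c_t^w w = \mu_{t+h}(c_{t+h}-c_t)^w w + \mu_t(\mu_h-I)c_t^w w$ (this split already appears explicitly just before the lemma, in the verification that $g \in C([0,\infty),\cS)$), combined with Proposition~\ref{prop:semigroupQs}, Lemma~\ref{lem:rtbtcont}, $b_t \in C([0,T],\Gamma^0)$, and \eqref{eq:shubinpsdocont}; your contribution is to package this as a single auxiliary observation and to make explicit the sharpened statement $t\mapsto v_t \in C([0,T],Q^{s+2})$, which the paper uses only implicitly when handling the $q^w(x,D)$ term in \eqref{eq:Ktcontinuity3}.
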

\begin{proof}
Property \eqref{eq:mutrtcontinuity2} is a consequence of Proposition \ref{prop:semigroupQs}, 
Lemma \ref{lem:rtbtcont} and \eqref{eq:shubinpsdocont}. 
Lemma \ref{lem:sviluppo} gives $b_t \in C([0,T], \Gamma^0)$. 
Hence property \eqref{eq:Ktcontinuity2} follows from Proposition \ref{prop:semigroupQs}
and  \eqref{eq:shubinpsdocont}. 

Finally we show \eqref{eq:Ktcontinuity3}. 
From \eqref{eq:dtvt} we obtain for $t>0$
\begin{equation*}
\partial_t v_t
= - i q^w (x,D) \mu_t b_t^w (x,D) u_0 - i p^w (x,D) \mu_t b_t^w (x,D) u_0 + \mu_t r_t^w (x,D) u_0. 
\end{equation*}
In order to prove \eqref{eq:Ktcontinuity3} we must show
\begin{align}
& t \mapsto p^w (x,D) \mu_t b_t^w (x,D) u_0 \in C((0,T), Q^s), \label{eq:subcase2} \\
& t \mapsto q^w (x,D) \mu_t b_t^w (x,D) u_0 \in C((0,T), Q^s). \label{eq:subcase3}
\end{align}
by virtue of \eqref{eq:mutrtcontinuity2}. 
Claim \eqref{eq:subcase2} is a consequence of \eqref{eq:Ktcontinuity2}, $p \in \Gamma^{-\delta}$ and \eqref{eq:shubinpsdocont}. 
Finally claim \eqref{eq:subcase3} is a consequence of $q \in \Gamma^2$, \eqref{eq:shubinpsdocont} and Proposition \ref{prop:semigroupQs}.
\end{proof}

From Lemma \ref{lem:Qscontinuity} we may conclude that 
\begin{equation*}
t \mapsto \cK_t  u_0 \in C( [0,T], Q^s) \cap C^1( (0,T), Q^s). 
\end{equation*}
From Proposition \ref{prop:semigroupQs} combined with $b_t \in C([0,T], \Gamma^0)$
and \eqref{eq:shubinpsdocont} we obtain 
$\cK_t  u_0 \in Q^{s+2} \subseteq D\big( q^w(x,D) + p^w(x,D)\big)$ for $t > 0$. 
Since $[0,T] \ni t \mapsto \cK_t  u_0$ solves \eqref{eq:CPparametrix} with $u_0 \in Q^{s+2}(\rr d)$, 
it is a classical solution according to \cite[Definition~4.2.1]{Pazy1}. 

Assembling the knowledge from \eqref{eq:propagator2} and Theorem  \ref{thm:parametrix} gives the following conclusion. 
The map $t \mapsto (\cK_t - T_t) u_0$ solves the equation
\begin{equation*}
\left\{
\begin{array}{rl}
\partial_t (\cK_t - T_t) u_0 + i(q^w(x,D)+p^w(x,D)) (\cK_t -T_t) u_0 & = \mu_t r_t^w(x,D)u_0, \quad t > 0, \\
(\cK_0 - T_0) u_0 & = 0
\end{array}
\right.
\end{equation*}
and 
\begin{equation*}
t \mapsto (\cK_t - T_t)  u_0 \in C( [0,T], Q^s) \cap C^1( (0,T), Q^s). 
\end{equation*}

Combining \eqref{eq:mutrtcontinuity2} in Lemma \ref{lem:Qscontinuity} with \cite[Corollary~4.2.2]{Pazy1} gives, 
invoking \eqref{eq:propagator2}, 
\begin{align*}
\cR_t u_0 & : = (\cK_t - T_t )u_0 = \int_0^t T_{t-s} \mu_s r_s^w(x,D) u_0 \, \dd s \\
& =  \mu_t \int_0^t \mu_{-s} c_{t-s}^w(x,D) \mu_s r_s^w(x,D) u_0 \, \dd s. 
\end{align*}
\begin{lem}\label{lem:regularizer}
For $t>0$ the kernel of the operator $\mu_{-t} \cR_t$ belongs to $\cS(\rr {2d})$. 
\end{lem}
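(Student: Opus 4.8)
We want to show that for $t>0$ the Schwartz kernel of $\mu_{-t}\cR_t$ lies in $\cS(\rr{2d})$, which by the characterization of $\cS$ via rapid decay of an FBI transform (or equivalently via \eqref{eq:wignerweyl} and smoothing operators) amounts to showing that $\mu_{-t}\cR_t$ maps $\cS'(\rr d)$ continuously into $\cS(\rr d)$ when $\cS'$ carries the strong topology; recall that by Tr\`eves such operators are exactly those with kernel in $\cS(\rr{2d})$. From the formula already derived,
\begin{equation*}
\mu_{-t}\cR_t = \int_0^t \mu_{-s}\, c_{t-s}^w(x,D)\, \mu_s\, r_s^w(x,D)\, \dd s,
\end{equation*}
so I would first observe that, by \eqref{eq:metaplecticoperator}, $\mu_{-s} c_{t-s}^w(x,D) \mu_s = (c_{t-s}\circ\chi_s^{-1})^w(x,D)$ and hence is a Weyl operator whose symbol lies in the Sj\"ostrand class $\Gamma_0^0$ of \eqref{eq:symbolmodulation} uniformly over $s\in[0,t]$ (since $\Gamma_0^0$ is invariant under composition with symplectic changes of variables, these being affine). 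Thus the integrand factors as $E_s F_s$ where $E_s = (c_{t-s}\circ\chi_s^{-1})^w(x,D)$ has symbol in $\Gamma_0^0$ and $F_s = r_s^w(x,D)$ has symbol $r_s \in \cS(\rr{2d})$ by Lemma~\ref{lem:rtbtcont}.

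**Key steps.** First I would recall that a Weyl operator with symbol in $\cS(\rr{2d})$ maps $\cS'(\rr d)$ continuously into $\cS(\rr d)$; this is exactly the regularizing property quoted after \eqref{eq:weylsymbolkernel}. Hence each $F_s$ is regularizing. Second, I would note that an operator with symbol in $\Gamma_0^0 = \bigcap_{s\geqs 0} M^{\infty,1}_{1\otimes v_s}$ is bounded on every $Q^s(\rr d)$: indeed $M^{\infty,1}_{1\otimes v_s}$ symbols give operators mapping $Q^{\sigma}\to Q^{\sigma}$ continuously for $|\sigma|$ up to roughly $s$, so taking the intersection over all $s$ we get continuity $Q^\sigma \to Q^\sigma$ for every $\sigma \in \ro$ (this is essentially the content used to obtain \eqref{eq:symbolmodulation}–\eqref{eq:propagator2}). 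Combining these two facts: for fixed $s$, $F_s : \cS'(\rr d) \to \cS(\rr d) = \bigcap_\sigma Q^\sigma$ continuously (using \eqref{eq:SQs}), then $E_s : Q^\sigma \to Q^\sigma$ continuously for each $\sigma$, so $E_s F_s : \cS'(\rr d) \to \bigcap_\sigma Q^\sigma = \cS(\rr d)$ continuously, i.e.\ $E_s F_s$ is regularizing for each $s$. Third, I must upgrade this pointwise-in-$s$ statement to the $s$-integral: I would check that $s \mapsto E_s F_s$ is continuous in the topology of $\cL(Q^{\sigma+2}, Q^{\sigma})$ — using the continuity $s \mapsto r_s \in \cS(\rr{2d})$ from Lemma~\ref{lem:rtbtcont}, the continuity $s\mapsto c_{t-s}\circ\chi_s^{-1}$ in $\Gamma_0^0$ (from \eqref{eq:symbolmodulation} and smoothness of $s\mapsto\chi_s$), and the strong continuity of $\mu_{\pm s}$ on $Q^\sigma$ from Proposition~\ref{prop:semigroupQs} — and that the relevant operator norms are bounded on $[0,t]$, so the Bochner integral converges in each $\cL(Q^{\sigma+2},Q^\sigma)$. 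Since $\bigcap_\sigma Q^\sigma = \cS$, the integral defines an operator $\cS'(\rr d) \to \cS(\rr d)$, continuous for the strong topology of $\cS'$; by Tr\`eves' theorem its kernel lies in $\cS(\rr{2d})$.

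**Main obstacle.** The delicate point is the interchange of "$\cS$-regularizing" with the $s$-integration: regularizing operators do not form a Banach space, so one cannot simply Bochner-integrate in a single norm. The fix is to fix $\sigma$, view the integrand as an operator-valued map $[0,t] \to \cL(Q^{\sigma+2}, Q^\sigma)$ which is continuous and bounded (hence Bochner integrable) for \emph{every} $\sigma$, perform the integral there, and then intersect over $\sigma$ invoking \eqref{eq:SQs}; one must be careful that the resulting operator is independent of $\sigma$, which follows from the compatibility of the $Q^\sigma$ scale and density of $\cS$. A secondary technical point is justifying that composition with the affine symplectic map $\chi_s^{-1}$ preserves the Sj\"ostrand class $\Gamma_0^0$ uniformly and continuously in $s$ — but since $\chi_s^{-1}$ is linear and depends smoothly on $s$ with $\|\chi_s^{-1}\|$ bounded on $[0,t]$, and $\Gamma_0^0$ is defined by $L^\infty$ bounds on derivatives, this is a routine chain-rule estimate. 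Everything else is bookkeeping with the mapping properties already collected in Section~\ref{sec:prelim}.
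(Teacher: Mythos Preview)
Your strategy matches the paper's: reduce to showing that $\mu_{-t}\cR_t:\cS'\to\cS$ is continuous for the strong topology and then invoke Tr\`eves. The paper executes this by combining the uniform $\cL(Q^\sigma)$ bounds on $\mu_{\pm s}$ from Proposition~\ref{prop:semigroupQs} and on $C_{t-s}=c_{t-s}^w(x,D)$ from \eqref{eq:Ctopnorm} with a direct kernel-seminorm verification that $r_s^w(x,D):\cS'\to\cS$ is continuous uniformly over $s\in[0,t]$. Your route via $E_s=(c_{t-s}\circ\chi_s)^w$ (metaplectic covariance \eqref{eq:metaplecticoperator} actually gives $\chi_s$, not $\chi_s^{-1}$, though this is immaterial) and $\Gamma_0^0$-boundedness on every $Q^\sigma$ is a legitimate repackaging of the same ingredients.

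There is, however, a genuine slip in your integration step. You Bochner-integrate in $\cL(Q^{\sigma+2},Q^\sigma)$ for each $\sigma$ and then deduce, via $\bigcap_\sigma Q^\sigma=\cS$, that the resulting operator maps $\cS'\to\cS$. This inference fails: an operator lying in $\bigcap_\sigma\cL(Q^{\sigma+2},Q^\sigma)$ is continuous $\cS\to\cS$ (projective limits on both sides), not $\cS'\to\cS$---the identity operator is a counterexample. What you actually need, and what your own pointwise analysis of $E_sF_s$ already supplies, is a uniform bound in $\cL(Q^{\sigma_1},Q^{\sigma_2})$ for \emph{arbitrary} pairs $\sigma_1,\sigma_2\in\ro$: since $F_s$ has Schwartz symbol it maps $Q^{\sigma_1}\to Q^{\sigma_2}$ for any $\sigma_2$, and $E_s$ preserves $Q^{\sigma_2}$. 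Then for $u\in\cS'$ choose $\sigma_1$ with $u\in Q^{\sigma_1}$ and let $\sigma_2$ be arbitrary. With that correction your argument is complete. One further wrinkle: you assert continuity of $s\mapsto c_{t-s}\circ\chi_s$ into $\Gamma_0^0$, but only the pointwise membership \eqref{eq:symbolmodulation} is on record. It is both easier and sufficient to use instead the strong continuity of $s\mapsto C_{t-s}$ on each $Q^\sigma$ together with the uniform norm bound \eqref{eq:Ctopnorm}, exactly as the paper does, to justify the Bochner integral.
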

\begin{proof}
By \cite[Eq.~(50.17) p.~525 and Theorem~51.6]{Treves1} the conclusion follows if we can show that the operator
\begin{equation*}
\int_0^t \mu_{-s} c_{t-s}^w(x,D) \mu_s r_s^w(x,D) \, \dd s
\end{equation*}
is continuous $\cS'(\rr d) \to \cS(\rr d)$ when $\cS'(\rr d)$ is equipped with its strong topology \cite[Section~V.7]{Reed1}. 
By Proposition \ref{prop:semigroupQs} and \eqref{eq:Ctopnorm}, intersected over $s \geqs 0$, 
it suffices to show that $r_t^w(x,D): \cS'(\rr d) \to \cS(\rr d)$ is continuous uniformly over $t \in [0,T]$, 
when $\cS'(\rr d)$ is equipped with the strong topology. 

Let $K_t \in \cS(\rr {2d})$ denote the kernel of $r_t^w(x,D)$. 
According to \eqref{eq:weylsymbolkernel} $K_t$ and $r_t$ are related by the composition of a linear change of variables and partial inverse Fourier transform. 
These are continuous operators on $\cS(\rr {2d})$ and thus Lemma \ref{lem:rtbtcont} implies
$K_t \in C([0,T], \cS(\rr {2d}))$. 
 
We use the seminorms on $g \in \cS(\rr d)$ 
\begin{equation*}
\| g \|_{\alpha,\beta} = \sup_{x \in \rr d} \left| x^\alpha \pd \beta g(x) \right|, \qquad \alpha,\beta \in \nn d. 
\end{equation*}
Let $u \in \cS'(\rr d)$ and $\alpha,\beta \in \nn d$. 
We have
\begin{equation}\label{eq:seminormest}
\| r_t^w(x,D) u \|_{\alpha,\beta}
= \sup_{x \in \rr d} \left| \la x^\alpha \pdd x \beta K_t (x,\cdot), u \ra \right| = \sup_{g \in B} \left| \la g, u \ra \right|, 
\end{equation}
where 
\begin{equation*}
B = \{  x^\alpha \pdd x \beta K_t (x,\cdot) \in \cS(\rr d), \ x \in \rr d \} \subseteq \cS(\rr d). 
\end{equation*}
Let $\gamma, \kappa \in \nn d$ be arbitrary. 
We have
\begin{equation*}
\sup_{g \in B} \| g \|_{\gamma,\kappa}
= \sup_{x,y \in \rr d} \left| x^\alpha y^\gamma \pdd x \beta \pdd y \kappa K_t (x,y ) \right| 
= \| K_t \|_{(\alpha, \gamma), (\beta, \kappa)} < \infty
\end{equation*}
where the latter seminorm bound is uniform over $t \in [0,T]$. 
Thus $B \subseteq \cS(\rr d)$ is a bounded set which implies that $u \mapsto \sup_{g \in B} \left| \la g, u \ra \right|$
is a seminorm on $\cS'(\rr d)$ endowed with the strong topology. 

Since $\alpha, \beta \in \nn d$ are arbitrary, \eqref{eq:seminormest} combined with 
\cite[Theorem~V.2]{Reed1} thus prove the claim that 
$r_t^w(x,D): \cS'(\rr d) \to \cS(\rr d)$ is continuous uniformly over $t \in [0,T]$, 
when $\cS'(\rr d)$ is equipped with the strong topology. 
\end{proof}

Combining Lemma \ref{lem:regularizer} with the fact that an operator has kernel in $\cS(\rr {2d})$ if and only if its Weyl symbol belongs to the same space, we obtain
\begin{equation*}
\cR_t = \mu_t a_t^w(x,D)
\end{equation*}
where $a_t \in \cS(\rr {2d})$ and $t \geqs 0$. 
This gives finally 
\begin{equation*}
T_t = \cK_t - \cR_t = \mu_t \left( b_t^w(x,D) - a_t^w(x,D) \right), \qquad t \geqs 0, 
\end{equation*}
which, in view of \eqref{eq:propagator2}, 
implies that $c_t = b_t - a_t \in \Gamma^0$ 
which is the sought improvement to \eqref{eq:symbolmodulation}.
Thus \eqref{eq:ctsymbol} has at long last been proved. 
This means that we have finally obtained our main result Theorem \ref{thm:mainresult}, which can be alternatively phrased:  
\emph{If $\delta >0$ and $p \in \Gamma^{-\delta}$ then the Cauchy problem \eqref{eq:cp} has an FIO propagator $T_t \in \cI^0( \chi_t)$ for $t \geqs 0$.} 

\section{Singularities of the solutions}\label{sec:sing}

In this section we will discuss propagation of singularities under equation \eqref{eq:cp}. 
Theorem \ref{thm:mainresult} and Proposition \ref{prop:mapprop} give the following result. 

\begin{prop}\label{prop:FIOWFpropagator}
Let $T_t \in \cI^0( e^{2 t F} )$ be the propagator to Cauchy problem \eqref{eq:cp}. 
If $u_0 \in \cS'(\rr d)$ then for $t \geqs 0$ 
\begin{equation*}
\WF( T_t u_0) \subseteq e^{2tF} \WF(u_0).
\end{equation*}
\end{prop}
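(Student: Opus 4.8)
The plan is to deduce the statement directly from the structural result Theorem~\ref{thm:mainresult} together with the microlocal mapping property of Shubin type Fourier integral operators recorded in Proposition~\ref{prop:mapprop}. First I would invoke Theorem~\ref{thm:mainresult} in its FIO formulation (the rephrasing given at the end of Section~\ref{sec:application}): for each $t \geqs 0$ the propagator $T_t$ belongs to $\cI^0(\chi_t)$, where $\chi_t = e^{2tF} \in \Sp(d,\ro)$ is the Hamiltonian flow of $q$ identified in \eqref{eq:metaplecticpropagator} and the surrounding discussion. This reduces the claim to the general fact about operators in $\cI^0(\chi)$.

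Then I would apply the second assertion of Proposition~\ref{prop:mapprop} with $\cK = T_t$, $m = 0$ and $\chi = \chi_t = e^{2tF}$: since $T_t \in \cI^0(e^{2tF})$, for every $u_0 \in \cS'(\rr d)$ we obtain $\WF(T_t u_0) \subseteq e^{2tF}\,\WF(u_0)$, which is exactly the assertion. The case $t = 0$ is trivial, as $T_0 = I$ and $e^{0} = I$.

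There is essentially no obstacle here; the proposition is a corollary. The only point that perhaps deserves a remark is the bookkeeping that the symplectic matrix attached to the factorization $T_t = \mu_t c_t^w(x,D)$ is the one carried by the metaplectic factor $\mu_t$, namely $\pi(\mu_t) = \chi_t = e^{2tF}$, while the pseudodifferential factor $c_t^w(x,D) \in \cI^0(I)$ contributes the identity symplectic matrix under the composition rule of Theorem~\ref{prop:composition}. With that identification in place, the inclusion follows at once.
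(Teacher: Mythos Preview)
Your proposal is correct and follows exactly the paper's approach: the paper states the proposition as an immediate consequence of Theorem~\ref{thm:mainresult} and Proposition~\ref{prop:mapprop}, with no further argument. Your additional remark about the factorization $T_t = \mu_t c_t^w(x,D)$ and the composition rule is accurate but not needed, since the hypothesis already packages $T_t \in \cI^0(e^{2tF})$.
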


\begin{rem}
This result can also be seen as a consequence of \cite[Proposition~2.2]{Hormander2}, and
\cite[Theorem~5.1]{Rodino1} combined with \eqref{eq:symbolmodulation}.
\end{rem}

A more refined concept of singularities are (Shubin) $\Gamma$-Lagrangian distributions \cite{Cappiello3} 
which we now explain. 
A Lagrangian linear subspace $\Lambda \subseteq T^*\rr d$ is a space of dimension $d$ on which the restriction of $\sigma$ vanishes. 
If $\Lambda$ is Lagrangian then so is $\chi \Lambda$ for each $\chi \in \Sp(d,\ro)$.
An example of a Lagrangian is $\Lambda_0=\rr{d}\times\{0\}$, and 
all other Lagrangians may be obtained by application of elements $\chi\in \Sp(d,\ro)$ to $\Lambda_0$ \cite{Cappiello3}.

A Lagrangian $\Lambda \subseteq T^*\rr d$ may be parametrized in the form  
\begin{equation}\label{eq:Lagrangian}
\Lambda = \{ (X, AX + Z) \in T^* \rr d, \ X \in Y, \ Z \in Y^\perp \} \subseteq T^*\rr d
\end{equation}
where $Y \subseteq \rr d$ is a linear subspace and $A \in \M_{d \times d}( \ro )$ is a symmetric matrix that leaves $Y$ invariant, see \cite{PRW1}. 
It then automatically leaves $Y^\perp$ invariant so can be written
$A = A_Y + A_{Y^\perp}$
where $A_Y = \pi_Y A \pi_Y$ and $A_{Y^\perp} = \pi_{Y^\perp} A \pi_{Y^\perp}$. 

Note that the subspace $Y \subseteq \rr d$ is uniquely determined by $\Lambda$, but the matrix $A$ is not. 
In fact $A_Y$ is uniquely determined, but $A_{Y^\perp}$ can be any matrix such that $Y \subseteq \Ker A_{Y^\perp}$ and $A_{Y^\perp}$ leaves $Y^\perp$ invariant.
  
The topological space of $\Gamma$-Lagrangian distributions of order $m \in \ro$
with respect to a Lagrangian $\Lambda \subseteq T^*\rr d$ 
is a subspace denoted $I_\Gamma^m(\rr{d},\Lambda) \subseteq \cS'(\rr d)$ \cite[Definition~6.3]{Cappiello3}. 
By \cite[Corollary~6.12]{Cappiello3} this space can be defined as follows. 

\begin{defn}
A distribution $u\in\cS'(\rr d)$ satisfies $u\in I_\Gamma^m(\rr{d},\Lambda)$ if there exist $\chi \in \Sp(d,\ro)$ that maps $\chi: \rr d \times\{0\} \to \Lambda$ isomorphically, and $a \in \Gamma^m(\rr d)$ such that $u= \mu a$ where $\mu \in \Mp(d)$ satisfies $\pi(\mu) = \chi$. 
\end{defn}

By \cite[Proposition~6.7]{Cappiello3} we have $\WF(u) \subseteq \Lambda$ when $u \in I_\Gamma^m(\rr{d},\Lambda)$. 
Kernels of FIOs are $\Gamma$-Lagrangian distributions associated with the twisted graph Lagrangian \eqref{eq:twistedgraphlagrangian} 
of $\chi$ in $T^* \rr {2d}$ \cite[Theorem~7.2]{Cappiello3}. 
We have the following result on the action of FIOs on $\Gamma$-Lagrangian distributions. 

\begin{thm}
\label{thm:FIOonLag}
{\rm \cite[Theorem~6.11]{Cappiello3}}
Suppose $\chi \in \Sp(d, \ro )$, $\cK \in \cI^{m'}(\chi)$ and let $\Lambda \subseteq T^* \rr d$ be a Lagrangian. 
Then
\begin{equation*}
\cK: I^m_\Gamma(\rr d,\Lambda) \to  I^{m+m'}_\Gamma (\rr d,\chi \Lambda)
\end{equation*}
is continuous. 
\end{thm}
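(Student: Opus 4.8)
\textbf{Proof plan for Theorem~\ref{thm:FIOonLag}.}
The plan is to reduce everything to the factorization result of Theorem~\ref{thm:repFIO} together with the defining description of $\Gamma$-Lagrangian distributions, so that the statement becomes essentially an identity between metaplectic operators modulo a Shubin pseudodifferential factor. First I would take $u \in I^m_\Gamma(\rr d, \Lambda)$ and fix, by definition, a symplectic map $\chi_1 \in \Sp(d,\ro)$ with $\chi_1 : \rr d \times \{0\} \to \Lambda$ an isomorphism, together with a symbol $a \in \Gamma^m(\rr d)$ and $\mu_1 \in \Mp(d)$ with $\pi(\mu_1) = \chi_1$, such that $u = \mu_1 a$ (viewing $a$ as a tempered distribution via its Weyl quantization acting on the Gaussian, or more precisely in the sense of the cited characterization). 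Next, since $\cK \in \cI^{m'}(\chi)$, Theorem~\ref{thm:repFIO} provides $b \in \Gamma^{m'}$ such that $\cK = \mu (b\circ\chi)^w(x,D)$ for any $\mu \in \Mp(d)$ with $\pi(\mu) = \chi$; I would instead use the companion factorization $\cK = b^w(x,D)\,\mu$, or even better choose the intermediate metaplectic operator adapted to $\chi_1$, so that $\cK \mu_1 = \mu' c^w(x,D)$ for a suitable $\mu' \in \Mp(d)$ with $\pi(\mu') = \chi\chi_1$ and $c \in \Gamma^{m'}$ (obtained by conjugating $b$ with $\chi_1$, which preserves the Shubin class by \eqref{eq:symbolestimparam}-type estimates).

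The key computation is then
\[
\cK u = \cK \mu_1 a = \mu' c^w(x,D) a = \mu' \, (c \wpr \widetilde a)
\]
where $\widetilde a$ is the Weyl symbol of the pseudodifferential operator corresponding to $a$ (if one sets up the identification $u = \mu_1 a$ with $a$ literally a symbol acting on $\psi_0$, one instead uses that $c^w(x,D)$ composed with a localization/quantization operator again has Shubin symbol of the summed order). The point is that $c \wpr \widetilde a \in \Gamma^{m+m'}$ by continuity of the Weyl product $\Gamma^{m'} \times \Gamma^{m} \to \Gamma^{m+m'}$ recalled in Section~\ref{sec:prelim}, and $\pi(\mu') = \chi\chi_1$ maps $\rr d \times \{0\}$ isomorphically onto $\chi\chi_1(\rr d \times \{0\}) = \chi\Lambda$. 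Hence $\cK u \in I^{m+m'}_\Gamma(\rr d, \chi\Lambda)$ by the very definition of that space. Continuity of the map is inherited from the continuity of the Weyl product on the relevant Fr\'echet spaces together with the boundedness of the metaplectic operators on $\cS'$; one tracks seminorms of $c \wpr \widetilde a$ in terms of those of $b$ (hence of $\cK$, via Theorem~\ref{thm:repFIO}) and of $a$.

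I expect the main obstacle to be bookkeeping rather than substance: one must carefully pin down the precise sense in which $u = \mu_1 a$ realizes a $\Gamma$-Lagrangian distribution — whether $a$ is a symbol quantized and applied to the ground state $\psi_0$, or directly an element of $\cS'$ as in the cited \cite[Corollary~6.12]{Cappiello3} — and then verify that precomposing a Shubin pseudodifferential operator with that realization again lands in the same normal form with the symbol order added. This is exactly the content of the composition/representation machinery of \cite{Cappiello3} (Theorems~\ref{thm:repFIO} and~\ref{prop:composition}), so the argument is really an application of those results once the identification is fixed; the only genuinely delicate point is that conjugation by $\chi_1$ and the Weyl product do not destroy the Shubin estimates, which follows from estimates of the type \eqref{eq:symbolestimparam} uniformly in the (here fixed) symplectic matrix.
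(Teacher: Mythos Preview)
The paper does not prove this theorem; it is quoted from \cite[Theorem~6.11]{Cappiello3}, so there is no in-paper argument to compare against. Your reduction strategy --- use Theorems~\ref{thm:repFIO} and~\ref{prop:composition} to write $\cK\mu_1 = \mu'\, c^w(x,D)$ with $c \in \Gamma^{m'}(\rr{2d})$ and $\pi(\mu') = \chi\chi_1$, so that $\cK u = \mu'\bigl(c^w(x,D)a\bigr)$ --- is the natural one and is almost certainly how \cite{Cappiello3} proceeds.

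There is, however, a genuine gap at the step you call ``the key computation.'' In the identification $u = \mu_1 a$, the object $a$ belongs to $\Gamma^m(\rr d)$: it is a function on $\rr d$, viewed as an element of $\cS'(\rr d)$ on which $\mu_1$ acts. It is \emph{not} a Weyl symbol on $\rr{2d}$, so the line $c^w(x,D)a = c \wpr \widetilde a$ is a type error. The Weyl product $\wpr: \Gamma^{m'}(\rr{2d}) \times \Gamma^{m}(\rr{2d}) \to \Gamma^{m+m'}(\rr{2d})$ encodes \emph{composition of operators}; what you need here is the \emph{action} of the operator $c^w(x,D)$ on the distribution $a$. The missing ingredient is the continuous mapping property
\[
c^w(x,D): \Gamma^m(\rr d) \longrightarrow \Gamma^{m+m'}(\rr d), \qquad c \in \Gamma^{m'}(\rr{2d}),
\]
i.e.\ that a Shubin pseudodifferential operator of order $m'$ preserves the Shubin filtration on functions of $d$ variables. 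This is exactly the special case $\chi = I$, $\Lambda = \rr d \times \{0\}$ of the theorem, and it requires its own oscillatory-integral argument --- it does not follow from the bilinear continuity of $\wpr$. Your closing paragraph shows you sensed the mismatch (``whether $a$ is a symbol quantized and applied to $\psi_0$, or directly an element of $\cS'$''); the answer is the latter, and once you replace the spurious Weyl product by the genuine mapping property above, the rest of your outline goes through.
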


As a consequence we obtain the following result
which can be seen as a refinement of Proposition \ref{prop:FIOWFpropagator}.

\begin{thm}
\label{thm:Lagsol}
Suppose $\delta >0$ and $p \in \Gamma^{-\delta}$. 
Let $T_t \in \cI^0( e^{2 t F} )$ be the propagator to Cauchy problem \eqref{eq:cp},
let $\Lambda \subseteq T^* \rr d$ be a Lagrangian
and let $u_0 \in I^m_\Gamma(\rr d, \Lambda)$. 
Then for all $t \geqs 0$
\begin{equation*}
T_t u_0 \in I^m_\Gamma (\rr d, e^{2 t F} \Lambda). 
\end{equation*}
\end{thm}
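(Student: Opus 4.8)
The plan is to combine the main result (Theorem~\ref{thm:mainresult}), which expresses the propagator as $T_t \in \cI^0(\chi_t)$ with $\chi_t = e^{2tF}$, with the mapping property of Shubin type FIOs on $\Gamma$-Lagrangian distributions, which is exactly Theorem~\ref{thm:FIOonLag}. First I would recall that, by Theorem~\ref{thm:mainresult} (in its reformulated form at the end of Section~\ref{sec:application}), the propagator satisfies $T_t \in \cI^0(e^{2tF})$ for every $t \geqs 0$. Then I would apply Theorem~\ref{thm:FIOonLag} with $\cK = T_t$, $m' = 0$, and $\chi = \chi_t = e^{2tF}$: since $u_0 \in I^m_\Gamma(\rr d, \Lambda)$ and $\cI^0(\chi_t)$ maps $I^m_\Gamma(\rr d,\Lambda)$ continuously into $I^{m+0}_\Gamma(\rr d, \chi_t \Lambda) = I^m_\Gamma(\rr d, e^{2tF}\Lambda)$, we obtain directly $T_t u_0 \in I^m_\Gamma(\rr d, e^{2tF}\Lambda)$. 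One should check that $e^{2tF}\Lambda$ is again a Lagrangian, but this is immediate from $e^{2tF} = \chi_t \in \Sp(d,\ro)$ and the remark in Section~\ref{sec:sing} that $\chi \Lambda$ is Lagrangian whenever $\Lambda$ is and $\chi \in \Sp(d,\ro)$.

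There is essentially no obstacle here: the theorem is a formal corollary obtained by feeding the structural result on $T_t$ into the abstract functorial statement about FIOs acting on Lagrangian distributions. The only minor points to address are bookkeeping ones — namely that the order is preserved ($m' = 0$ so no shift), and that the symplectic matrix appearing in the FIO class is literally the Hamiltonian flow $\chi_t = e^{2tF}$ of $q$, which was established in \eqref{eq:metaplecticpropagator} and carried through the parametrix construction. I would also remark, for completeness, that since $\cR_t = \mu_t a_t^w(x,D)$ with $a_t \in \cS(\rr{2d})$ is regularizing and $\mu_t b_t^w(x,D) = \cK_t \in \cI^0(\chi_t)$, the decomposition $T_t = \cK_t - \cR_t$ shows that it does not matter whether one works with $T_t$ or $\cK_t$: both lie in $\cI^0(\chi_t)$, and in fact $\cR_t$ maps $\cS'$ into $\cS = I^\infty_\Gamma(\rr d,\Lambda')$ for any Lagrangian, so it does not affect the $\Gamma$-Lagrangian order.

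Concretely, the proof I would write is short:

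\begin{proof}
By Theorem~\ref{thm:mainresult} the propagator satisfies $T_t \in \cI^0(\chi_t)$ with $\chi_t = e^{2tF} \in \Sp(d,\ro)$ for every $t \geqs 0$. Since $\Lambda \subseteq T^* \rr d$ is a Lagrangian and $\chi_t \in \Sp(d,\ro)$, the subspace $\chi_t \Lambda = e^{2tF}\Lambda$ is again a Lagrangian. Applying Theorem~\ref{thm:FIOonLag} with $\cK = T_t$, $m' = 0$ and $\chi = \chi_t$ yields that $T_t: I^m_\Gamma(\rr d, \Lambda) \to I^{m}_\Gamma(\rr d, e^{2tF}\Lambda)$ is continuous, so in particular $T_t u_0 \in I^m_\Gamma(\rr d, e^{2tF}\Lambda)$ for all $t \geqs 0$.
\end{proof}
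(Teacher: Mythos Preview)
Your proposal is correct and matches the paper's approach exactly: the paper presents Theorem~\ref{thm:Lagsol} as an immediate consequence of Theorem~\ref{thm:mainresult} combined with Theorem~\ref{thm:FIOonLag}, with no further argument. One minor slip in your commentary (not in the formal proof): Schwartz functions lie in $I^{-\infty}_\Gamma = \bigcap_m I^m_\Gamma$, not $I^\infty_\Gamma$.
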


\subsection{Phase space estimates on the solutions}

In this final section we derive phase space estimates for the propagator and solutions to \eqref{eq:cp}. 
The estimates for the propagator will be relative to the underlying twisted graph Lagrangian \eqref{eq:twistedgraphlagrangian} of a symplectic matrix $\chi \in \Sp(d,\ro)$. 
For this purpose we adapt the integral transform $\cT_g$ (cf. Definition \ref{def:FBItransform}) to 
a matrix $\chi \in \Sp(d,\ro)$ and to a Lagrangian $\Lambda \subseteq T^*\rr d$ respectively, 
see \cite[Section~5 and Proposition~6.14]{Cappiello3}. 

\begin{defn}
We define the following phase factor adjusted versions of the transform $\cT_g$ where $g \in \cS(\rr d) \setminus 0$. 
\begin{enumerate}
\item If $\chi \in \Sp(d,\ro)$ and $u \in \cS'(\rr {2d})$ then
\begin{equation*}
\cT_{g \otimes g}^\chi u (z,\zeta)  = e^{-\frac{i}{2} \left( \la z, \zeta \ra + \sigma(\chi(z_2,-\zeta_2), (z_1,\zeta_1 ) ) \right) } \cT_{g \otimes g}u (z,\zeta), \quad (z,\zeta) \in T^* \rr {2d}.
\end{equation*}
\item If $\Lambda \subseteq T^*\rr d$ is a Lagrangian parametrized by $Y \subseteq \rr d$ and $A \in \M_{d \times d}(\ro)$ as in \eqref{eq:Lagrangian} and $u \in \cS'(\rr{d})$ then 
\begin{equation*}
\cT_g^\Lambda u(x,\xi) = e^{-i \left( \la \pi_{Y^\perp} x, \xi \ra + \frac{1}{2} \la x, A x\ra \right)} \cT_g u (x, \xi), \quad (x,\xi) \in T^* \rr d.
\end{equation*}
\end{enumerate}
\end{defn}

We have the following characterization of the kernels of FIOs (cf. \cite[Theorem~5.2]{Cappiello3} and \cite{Tataru}). 

\begin{prop}\label{thm:FIOkernelchar}
Let $K \in \cS'(\rr {2d})$, $\chi \in \Sp(d,\ro)$ and $g \in \cS(\rr {d}) \setminus 0$. Then $K$ is the kernel of an FIO in $\cI^m( \chi)$ 
if and only if the estimates 
\begin{equation*}
\begin{aligned}
| L_1 \cdots L_k 
\cT_{g \otimes g}^\chi K (z, \zeta)| 
& \lesssim ( 1 + \dist((z,\zeta), \Lambda_{-\chi}') )^{m-k} \, ( 1+\dist((z,\zeta), \Lambda_\chi') )^{-N}, 
\end{aligned}
\end{equation*}
hold for all $k,N \in \no$, where $( z,\zeta) \in T^* \rr {2d}$ and $L_j = \langle a_j, \nabla_{z,\zeta} \rangle$
with $a_j \in \Lambda_\chi'$ for $j=1,2,\dots,k$. 
\end{prop}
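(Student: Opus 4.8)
The plan is to prove Proposition~\ref{thm:FIOkernelchar} by reducing the characterization of FIO kernels to the already-established characterization of Shubin symbols (via Proposition~\ref{prop:Swdchar}) together with the factorization theorem \thmref{thm:repFIO}. The key point is that the phase-adjusted transform $\cT_{g \otimes g}^\chi$ is designed precisely so that, when $K$ is the kernel of $b^w(x,D)\mu$ with $\pi(\mu)=\chi$, the transform $\cT_{g\otimes g}^\chi K$ factors through the transform of the symbol $b$ composed with an affine symplectic change of coordinates adapted to $\Lambda_\chi'$.

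First I would establish the ``only if'' direction. Assume $K = K_{a,\varphi}$ is the kernel of an FIO $\cK \in \cI^m(\chi)$; by \thmref{thm:repFIO} we may write $\cK = b^w(x,D)\mu$ with $b \in \Gamma^m$ and $\pi(\mu)=\chi$. The kernel of $b^w(x,D)$ is $K_b$ as in \eqref{eq:weylsymbolkernel}, and the metaplectic factor $\mu$ acts on the kernel by an affine symplectic transformation in the $(y,\eta)$ variables. The heart of the computation is to show that, modulo the compensating exponential phase factor built into the definition of $\cT_{g\otimes g}^\chi$, one has an identity of the schematic form
\begin{equation*}
\cT_{g \otimes g}^\chi K(z,\zeta) = e^{i\Phi(z,\zeta)} \, \cT_{g_0} b \left( \kappa(z,\zeta) \right),
\end{equation*}
where $\kappa$ is an affine map that sends $\Lambda_\chi'$ to $\rr{2d}\times\{0\}$ (the ``diagonal'' set where the symbol transform lives) and sends $\Lambda_{-\chi}'$ to $\{0\}\times \rr{2d}$, $g_0 \in \cS(\rr{2d})\setminus 0$ is a Gaussian window built from $g \otimes g$, and $\Phi$ is a real quadratic phase. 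Once this identity is in place, the decay/growth estimates for $\cT_{g\otimes g}^\chi K$ follow directly from the Shubin symbol estimates for $b$: the factor $\eabs{(x,\xi)}^{m}$ for a symbol translates, under $\kappa$, into the factor $(1+\dist((z,\zeta),\Lambda_{-\chi}'))^{m}$, while the rapid decay $\eabs{(x,\xi)}^{-N}$ in the ``dual'' directions (for a symbol this comes from the fact that the symbol transform $\cT_{g_0} b$ decays rapidly transversally to the diagonal, which is the content of Proposition~\ref{prop:Swdchar} applied to the relevant localization) becomes $(1+\dist((z,\zeta),\Lambda_\chi'))^{-N}$. Applying the derivatives $L_j$ with $a_j \in \Lambda_\chi'$ corresponds, under $\kappa$, to differentiating along the diagonal directions of the symbol transform, each of which lowers the order by one by the Shubin estimates \eqref{eq:shubinestimate}.

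For the ``if'' direction I would run the argument in reverse: given $K$ satisfying the stated estimates, use the inversion formula $\|g\|_{L^2}^{-2}\cT_g^*\cT_g = \mathrm{id}$ adapted to the phase-shifted transform to reconstruct $K$, pull back through $\kappa^{-1}$, and verify that the resulting object is of the form $b^w(x,D)\mu$ with $b \in \Gamma^m$ by checking the Shubin estimates on $b$; these follow because $\kappa$ turns the hypotheses on $K$ into precisely the phase-space characterization of $\Gamma^m$ symbols. The main obstacle will be bookkeeping: correctly identifying the affine symplectic map $\kappa$ and the compensating quadratic phase $\Phi$, and verifying that the metaplectic representation acts on kernels in exactly the way that makes the exponential factor in the definition of $\cT_{g\otimes g}^\chi$ cancel all spurious oscillation. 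This is essentially a careful chase through the definitions in \cite{Cappiello3}; since the statement is quoted as \cite[Theorem~5.2]{Cappiello3}, I would in practice simply cite that source, but the self-contained route above is the one I would fill in if a proof were demanded.
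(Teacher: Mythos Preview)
The paper does not prove this proposition; it is quoted from \cite[Theorem~5.2]{Cappiello3} (see the line immediately preceding the statement). You correctly identify this at the end of your proposal, so there is no discrepancy to report: both you and the paper defer to the earlier reference.

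Your self-contained sketch is in the right spirit --- reducing via \thmref{thm:repFIO} to a symbol-side characterization, with the phase factor in $\cT_{g\otimes g}^\chi$ designed to kill the oscillation coming from the metaplectic factor and from the Weyl kernel --- and this is indeed how the proof in \cite{Cappiello3} proceeds. One small caution: the rapid decay of the symbol transform transversally to the diagonal does not follow from Proposition~\ref{prop:Swdchar} alone (which characterizes $\cS$ and $\cS'$, not $\Gamma^m$); you need the standard phase-space characterization of Shubin symbols, namely $|\cV_{g_0} b(w,\omega)| \lesssim \eabs{w}^m \eabs{\omega}^{-N}$ for all $N$, together with the analogous estimates for derivatives. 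That result is implicit in \cite{Cappiello2,Cappiello3} and is what underlies the equivalence you invoke.
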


We also have the following phase space characterization of Lagrangian distributions \cite[Proposition~6.14]{Cappiello3}.

\begin{prop}\label{prop:Lagrangianchar}
Let $\Lambda \subseteq T^* \rr d$ be a Lagrangian 
parametrized by $Y \subseteq \rr d$ and $A \in \M_{d \times d}(\ro)$ as in \eqref{eq:Lagrangian}, 
and let $V \subseteq T^* \rr d$ be a subspace transversal to $\Lambda$. 
A distribution $u \in \cS'(\rr d)$ satisfies $u \in I_\Gamma^m(\rr d,\Lambda)$ if and only if
for any $g \in \cS(\rr d)\setminus 0$ and for any $k,N \in \mathbb{N}$ we have
\begin{equation*}
\begin{aligned}
\left| L_1 \cdots L_k \cT^\Lambda_g u (x,\xi) \right |
& \lesssim \left( 1 + \dist((x,\xi),V) \right)^{m-k} \left( 1 + \dist((x,\xi),\Lambda) \right)^{-N},
\end{aligned}
\end{equation*}
with $(x,\xi) \in T^* \rr d$, where 
$L_j =\la b_j, \nabla_{x,\xi} \ra$ are first order differential operators with $b_j \in \Lambda$, $j=1,\dots,k$. 
\end{prop}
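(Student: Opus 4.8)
The plan is to reduce to the model Lagrangian $\Lambda_0 = \rr d \times \{0\}$ via metaplectic covariance, prove the equivalence there by hand, and transfer back.

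First I would fix $\chi \in \Sp(d,\ro)$ with $\chi\Lambda_0 = \Lambda$ and $\mu \in \Mp(d)$ with $\pi(\mu) = \chi$. By the definition of $I_\Gamma^m$, one has $u \in I_\Gamma^m(\rr d,\Lambda)$ if and only if $v := \mu^{-1}u \in I_\Gamma^m(\rr d,\Lambda_0)$, and the latter space is exactly the Shubin class $\Gamma^m(\rr d)$ sitting inside $\cS'(\rr d)$. On the transform side, the phase factors in the definition of $\cT_g^\Lambda$ are arranged precisely so that $\cT_g^\Lambda(\mu v)(z) = c\,\cT_{g'}^{\Lambda_0}v(\chi^{-1}z)$ for a unimodular constant $c$ and $g' = \mu^{-1}g$; this covariance identity I would verify by direct computation starting from $\cT_g u = e^{i\la x,\xi\ra}\cV_g u$, \eqref{eq:wignerweyl}, and the metaplectic covariance \eqref{eq:metaplecticoperator}, checking that the quadratic correction in the exponent of $\cT^\Lambda$ is exactly the one produced by the symplectic substitution. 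Since a directional derivative $L_j = \la b_j,\nabla_z\ra$ with $b_j\in\Lambda$ becomes $\la\chi^{-1}b_j,\nabla_w\ra$ with $\chi^{-1}b_j\in\Lambda_0$ under $w=\chi^{-1}z$, and the distances to $\chi^{-1}\Lambda$ and $\chi^{-1}V$ are comparable with those to $\Lambda$ and $V$, the proposition reduces to the case $\Lambda = \Lambda_0$. There $\cT_g^{\Lambda_0} = \cT_g$ (the phase factor is trivial, as $Y = \rr d$ and $A = 0$), the $\Lambda_0$-directional derivatives are plain $x$-derivatives, $\dist(\cdot,\Lambda_0) = |\xi|$, and one may take the transversal $V = \{0\}\times\rr d$ with $\dist(\cdot,V) = |x|$; independence of the window $g$ and of the transversal $V$ is a routine linear-algebra check, using for the latter that any two transversals satisfy $\dist(\cdot,V)\lesssim\dist(\cdot,V')+\dist(\cdot,\Lambda)$, which is harmless because of the arbitrarily fast decay in $\dist(\cdot,\Lambda)$.

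For necessity in the model case, given $a\in\Gamma^m(\rr d)$ I would write $\cT_g a(x,\xi) = (2\pi)^{-d/2}\int a(y)\overline{g(y-x)}e^{-i\la y-x,\xi\ra}\dd y$ and observe the intertwining $\partial_{x_j}\cT_g a = \cT_g(\partial_{x_j}a)$ (an integration by parts in $y$), so that $L_1\cdots L_k\cT_g a = \cT_g b$ with $b = \la b_1,\nabla\ra\cdots\la b_k,\nabla\ra a\in\Gamma^{m-k}(\rr d)$; it then suffices to bound $|\cT_g b(x,\xi)|\lesssim\eabs{x}^{m-k}\eabs{\xi}^{-N}$, which follows by repeated integration by parts in $y$ against the oscillation $e^{-i\la y-x,\xi\ra}$: each step yields a factor $\eabs{\xi}^{-1}$ and one derivative on $b(y)\overline{g(y-x)}$, while the Schwartz decay of $g$ localizes $y$ near $x$ so that $\eabs{y}^{m-k}$ may be replaced by $\eabs{x}^{m-k}$ via Peetre's inequality.

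For sufficiency --- the heart of the argument --- given $U = \cT_g a$ with the stated estimates, I would reconstruct $a = \|g\|_{L^2}^{-2}\cT_g^*U$; smoothness of $a$ is immediate since differentiating $\cT_g^*U(y) = (2\pi)^{-d/2}\int U(x,\xi)e^{i\la y-x,\xi\ra}g(y-x)\dd x\dd\xi$ only produces factors $\xi_j$ and $\partial g$, absorbed by the decay $\eabs{\xi}^{-N}$. The base estimate $|\cT_g^*U(y)|\lesssim\int|g(y-x)|\eabs{x}^m\dd x\lesssim\eabs{y}^m$ is obtained by carrying out the absolutely convergent $\xi$-integral first and splitting the $x$-integral into $|x-y|\leqs\eabs{y}$, where $\eabs{x}\sim\eabs{y}$, and its complement, where the Schwartz decay of $g$ defeats any polynomial. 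The improvement of the $x$-power under differentiation of $a$ comes again from the intertwining: $\cT_g(\partial_{y_j}a) = \partial_{x_j}U$ gives $\partial_y^\alpha a = \|g\|_{L^2}^{-2}\cT_g^*(\partial_x^\alpha U)$, and since $\partial_x^\alpha U$ satisfies the order-$(m-|\alpha|)$ hypothesis, the base estimate yields $|\partial_y^\alpha a(y)|\lesssim\eabs{y}^{m-|\alpha|}$, i.e. $a\in\Gamma^m(\rr d)$. Transferring back via $u = \mu v$ and the fact that $\mu$ is a homeomorphism of $\cS'(\rr d)$ and of $\Gamma^m(\rr d)\cong I_\Gamma^m(\rr d,\Lambda_0)$ finishes the argument. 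The hard part will be the bookkeeping rather than any single estimate: getting the phase factors in the covariance identity to match exactly, and tracking all constants so that the equivalence is topological --- each Shubin seminorm of $a$ controlled by finitely many of the constants in the hypotheses and conversely --- as required for an identification of topological vector spaces.
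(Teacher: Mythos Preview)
The paper does not prove this proposition here; it is quoted from the companion article \cite[Proposition~6.14]{Cappiello3}, so there is no in-paper proof to compare against.

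Your plan --- reduce to the model Lagrangian $\Lambda_0=\rr d\times\{0\}$ by a metaplectic operator, where $\cT_g^{\Lambda_0}=\cT_g$, and characterize $\Gamma^m(\rr d)$ there via the intertwining $\partial_{x_j}\cT_g=\cT_g\partial_j$ for necessity and the inversion $u=\|g\|^{-2}\cT_g^*\cT_g u$ for sufficiency --- is the natural route and is essentially the argument in \cite{Cappiello3}. The model-case estimates you sketch are correct.

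One point needs more than bookkeeping. Your covariance identity $\cT_g^\Lambda(\mu v)(z)=c\,\cT_{g'}^{\Lambda_0}v(\chi^{-1}z)$ with a \emph{constant} $c$ is not true for an arbitrary $\chi\in\Sp(d,\ro)$ with $\chi\Lambda_0=\Lambda$: the metaplectic covariance of $\cT_g$ always leaves a $z$-dependent quadratic phase, and the correction $e^{-i(\la\pi_{Y^\perp}x,\xi\ra+\frac12\la x,Ax\ra)}$ in the definition of $\cT_g^\Lambda$ is tied to the chosen parametrization $(Y,A)$, not to $\chi$. A residual factor $e^{iq(z)}$ with $q$ quadratic is \emph{not} harmless for the stated estimates, since $L_j e^{iq}=iL_j(q)\,e^{iq}$ with $L_j(q)$ linear in $z$, which can raise rather than lower the power of $1+\dist(z,V)$. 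The fix is to pick $\mu$ adapted to $(Y,A)$ --- concretely, $\mu^{-1}$ equal to multiplication by $e^{-\frac{i}{2}\la x,Ax\ra}$ followed by a partial Fourier transform in the $Y^\perp$-variables --- so that the phases cancel exactly and your identity holds with $c$ unimodular. With that choice the rest of your argument goes through as written.
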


Applying Propositions \ref{thm:FIOkernelchar} and \ref{prop:Lagrangianchar} to Theorems \ref{thm:mainresult} and \ref{thm:Lagsol} respectively, we obtain the following phase space estimates for the propagator $T_t$ and the solution to \eqref{eq:cp}, see also \cite{CGNR,Tataru} for related results in different symbol classes. Note that our regularity assumptions allow for a precise estimate also of the derivatives of the propagator and solutions.

\begin{thm}
\label{thm:phasespaceest}
Let $\delta >0$, $p \in \Gamma^{-\delta}$, $g \in \cS(\rr {d}) \setminus 0$, $\chi_t = e^{2 t F}$, and denote 
by $T_t \in \cI^0( \chi_t )$, $t \geqs 0$, the propagator to the Cauchy problem \eqref{eq:cp}. 

\begin{enumerate}

\item The kernel $K_t$ of the propagator $T_t$ satisfies the estimates for $t \geqs 0$
\begin{equation*}
\begin{aligned}
| L_1 \cdots L_k 
\cT_{g \otimes g}^{\chi_t} K_t (z, \zeta)| 
& \lesssim ( 1 + \dist((z,\zeta), \Lambda_{-\chi_t}') )^{-k} \, ( 1+\dist((z,\zeta), \Lambda_{\chi_t}') )^{-N}, \\
& \qquad  \qquad ( z,\zeta) \in T^* \rr {2d}, 
\end{aligned}
\end{equation*}
for all $k,N \in \no$, where 
$L_j = \langle a_j, \nabla_{z,\zeta} \rangle$
with $a_j \in \Lambda_{\chi_t}'$ for $j=1,2,\dots,k$. 

\item Suppose $\Lambda \subseteq T^* \rr d$ is a Lagrangian and set $\Lambda_t=\chi_t\Lambda$. 
If $u_0\in I_\Gamma^m(\rr d, \Lambda)$ then the solution $T_t u_0$ to \eqref{eq:cp} satisfies for $t \geqs 0,$ $k,N \in \no$ and $ (x,\xi) \in T^* \rr d$
\begin{equation*}
\left| L_1 \cdots L_k \cT^{\Lambda_t}_g (T_t u_0) (x,\xi) \right |
\lesssim \left( 1 + \dist((x,\xi),V_t) \right)^{m-k} \left( 1 + \dist((x,\xi),\Lambda_t) \right)^{-N}, 
\end{equation*}
where $L_j =\la b_j, \nabla_{x,\xi} \ra$ are first order differential operators with $b_j \in \Lambda_t$, $j=1,\dots,k$ and $V_t$ is a subspace transversal to $\Lambda_t$.
\end{enumerate}
\end{thm}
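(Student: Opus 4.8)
The plan is to deduce \thmref{thm:phasespaceest} directly from the structural results already established, namely \thmref{thm:mainresult} (equivalently $T_t \in \cI^0(\chi_t)$), \thmref{thm:Lagsol}, and the two phase space characterizations \thmref{thm:FIOkernelchar} and \thmref{prop:Lagrangianchar}. For part (1), observe that $T_t \in \cI^0(\chi_t)$ for each fixed $t \geqs 0$ by \thmref{thm:mainresult}; its kernel $K_t \in \cS'(\rr {2d})$ is therefore exactly the kernel of an FIO in $\cI^0(\chi_t)$. Applying \thmref{thm:FIOkernelchar} with $m=0$ and $\chi = \chi_t$ gives precisely the claimed estimates on $\cT^{\chi_t}_{g \otimes g} K_t$ with exponent $-k$ on the first factor and $-N$ on the second, for all $k,N \in \no$ and all first order operators $L_j = \la a_j, \nabla_{z,\zeta} \ra$ with $a_j \in \Lambda_{\chi_t}'$. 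There is essentially nothing to prove beyond invoking the correct theorem with the right parameters.

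For part (2), fix a Lagrangian $\Lambda \subseteq T^* \rr d$ and $u_0 \in I^m_\Gamma(\rr d, \Lambda)$. By \thmref{thm:Lagsol}, for each $t \geqs 0$ we have $T_t u_0 \in I^m_\Gamma(\rr d, \chi_t \Lambda) = I^m_\Gamma(\rr d, \Lambda_t)$. Now choose any subspace $V_t \subseteq T^* \rr d$ transversal to $\Lambda_t$ (such a subspace exists for any Lagrangian, e.g. $V_t = \chi_t V$ for a fixed transversal $V$ to $\Lambda$, since symplectic matrices preserve transversality). Since $\Lambda_t$ is itself a Lagrangian, it may be parametrized by some subspace $Y_t \subseteq \rr d$ and symmetric matrix $A_t$ as in \eqref{eq:Lagrangian}, and the adjusted transform $\cT^{\Lambda_t}_g$ is defined accordingly. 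Then \thmref{prop:Lagrangianchar}, applied to $u = T_t u_0 \in I^m_\Gamma(\rr d, \Lambda_t)$ with the transversal subspace $V_t$, yields exactly the stated estimates on $L_1 \cdots L_k \cT^{\Lambda_t}_g (T_t u_0)$ for all $k, N \in \no$ and all $L_j = \la b_j, \nabla_{x,\xi} \ra$ with $b_j \in \Lambda_t$.

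There is no serious obstacle: the theorem is a corollary obtained by combining earlier results, and the only points requiring a word of care are (a) noting that $T_t$ lies in the FIO class $\cI^0(\chi_t)$ for \emph{each} fixed $t$, which is the content of \thmref{thm:mainresult}, and (b) recording that $\Lambda_t = \chi_t \Lambda$ is again a Lagrangian (immediate, since $\chi_t \in \Sp(d,\ro)$) so that the adjusted transforms and the characterization \thmref{prop:Lagrangianchar} apply verbatim with $\Lambda$ replaced by $\Lambda_t$. If one wishes to be slightly more explicit, one can remark that the estimates hold with constants depending on $t$, $g$, $k$, $N$, and that for part (1) the case $k=0$ already expresses the Shubin-order-zero decay of $K_t$ away from the twisted graph Lagrangian $\Lambda'_{\chi_t}$ together with rapid decay off $\Lambda'_{\chi_t}$, while the higher $k$ capture the improved behavior of derivatives along $\Lambda'_{\chi_t}$. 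This completes the proof.
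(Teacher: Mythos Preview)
Your proposal is correct and follows precisely the paper's own approach: the paper states just before the theorem that it is obtained by ``applying Propositions \ref{thm:FIOkernelchar} and \ref{prop:Lagrangianchar} to Theorems \ref{thm:mainresult} and \ref{thm:Lagsol} respectively,'' which is exactly what you do. Your additional remarks on transversality and the parametrization of $\Lambda_t$ are harmless clarifications but not needed for the argument.
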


\bibliographystyle{amsplain}

\end{document}